\newcounter{mnotecount}[section]
\newcommand{\rmnote}[1]{}
\DeclareFontFamily{U}{mathb}{\hyphenchar\font45}
\DeclareFontShape{U}{mathb}{m}{n}{
      <5> <6> <7> <8> <9> <10> gen * mathb
      <10.95> mathb10 <12> <14.4> <17.28> <20.74> <24.88> mathb12
      }{}
\DeclareSymbolFont{mathb}{U}{mathb}{m}{n}
\let\dot\relax
\DeclareMathAccent{\dot}{0}{mathb}{"39}
\let\ddot\relax
\DeclareMathAccent{\ddot}{0}{mathb}{"3A}
\let\dddot\relax
\DeclareMathAccent{\dddot}{0}{mathb}{"3B}
\let\ddddot\relax
\DeclareMathAccent{\ddddot}{0}{mathb}{"3C}
\theoremstyle{plain}
\newtheorem*{theorem*}{Theorem}
\newtheorem{theorem}{Theorem}
\newtheorem*{lemma*}{Lemma}
\newtheorem{lemma}[theorem]{Lemma}
\newtheorem*{proposition*}{Proposition}
\newtheorem{proposition}[theorem]{Proposition}
\newtheorem*{corollary*}{Corollary}
\newtheorem{corollary}[theorem]{Corollary}
\newtheorem*{claim*}{Claim}
\newtheorem*{conjecture*}{Conjecture}
\newtheorem*{question*}{Question}
\theoremstyle{definition}
\newtheorem*{definition*}{Definition}
\newtheorem*{example*}{Example}
\newtheorem*{algorithm*}{Algorithm}
\newtheorem*{remark*}{Remark}
\newtheorem*{remarks*}{Remarks}
\newtheorem{remark}[theorem]{Remark}
\newtheorem*{convention*}{Convention}
\newcommand{\al}{\alpha}
\newcommand{\be}{\beta}
\newcommand{\ep}{\epsilon}
\newcommand{\rh}{\rho}
\newcommand{\si}{\sigma}
\newcommand{\vh}{\varphi}
\newcommand{\ch}{\chi}
\newcommand{\om}{\omega}
\newcommand{\De}{\Delta}
\newcommand{\Si}{\Sigma}
\newcommand{\N}{\mathbb{N}}
\newcommand{\R}{\mathbb{R}}
\newcommand{\cC}{\mathcal{C}}
\newcommand{\cE}{\mathcal{E}}
\newcommand{\cG}{\mathcal{G}}
\newcommand{\cS}{\mathcal{S}}
\newcommand{\fM}{\mathfrak{M}}
\newcommand{\fW}{\mathfrak{W}}
\newcommand{\fa}{\mathfrak{a}}
\newcommand{\fg}{\mathfrak{g}}
\newcommand{\fk}{\mathfrak{k}}
\newcommand{\fp}{\mathfrak{p}}
\newcommand{\p}{\partial}
\newcommand{\on}{\operatorname}
\newcommand{\sr}[1]%
{\ifmmode{}^\dagger\else${}^\dagger$\fi\ifvmode
\vbox to 0pt{\vss
 \hbox to 0pt{\hskip\hsize\hskip1em
 \vbox{\hsize3cm\raggedright\pretolerance10000
 \noindent #1\hfill}\hss}\vss}\else
 \vadjust{\vbox to0pt{\vss%
 \hbox to 0pt{\hskip\hsize\hskip1em%
 \vbox{\hsize3cm\raggedright\pretolerance10000%
 \noindent #1\hfill}\hss}\vss}}\fi%
}
\newcommand{\A}{\;\forall}
\newcommand{\E}{\;\exists}
\newcommand{\grad}{\on{grad}}
\newcommand{\Tr}{\on{Tr}}
\newcommand{\ol}{\overline}
\title[]
{Ultradifferentiable Chevalley theorems \\ and isotropic functions}
\author[A.~Rainer]{Armin Rainer}
\address{Fakult\"at f\"ur Mathematik, Universit\"at Wien, 
Oskar-Morgenstern-Platz~1, A-1090 Wien, Austria 
\& University of Education Lower Austria,
Campus Baden M\"uhlgasse 67, A-2500 Baden}
\email{armin.rainer@univie.ac.at}
\begin{document}

\begin{abstract}
	We prove ultradifferentiable Chevelley restriction theorems for a wide range of ultradifferentiable classes. 
	As a special case we find that isotropic functions, i.e., functions defined on the vector space of real symmetric matrices invariant 
	under the action of the special orthogonal group by conjugation, possess some ultradifferentiable regularity if 
	and only if their restriction to diagonal matrices has the same regularity. 
\end{abstract}

\thanks{The author was supported by the Austrian Science Fund (FWF), START Programme Y963.}
\keywords{Isotropic functions, ultradifferentiable classes, Chevalley's theorem}
\subjclass[2010]{22E45, 
22E60, 
26E10, 
53C35, 
58C25, 
58D19 
}
\date{\today}

\maketitle

\section{Introduction}

Let the special orthogonal group $\on{SO}(n)$ act by conjugation on the vector space $\on{Sym}(n)$ of real symmetric $n \times n$ matrices. 
Functions $f : \on{Sym}(n) \to \R$ that are invariant under this action are called \emph{isotropic}, i.e.\
\[
	f(S A S^t) = f(A) \quad \text{ for all } A \in \on{Sym}(n), ~ S \in \on{SO}(n).
\]
By the spectral theorem, every $\on{SO}(n)$-orbit $\on{SO}(n) \cdot  A$ intersects the subspace $\on{Diag}(n) \cong \R^n$ of diagonal matrices 
orthogonally with respect to the invariant inner product $\langle A,B \rangle = \on{Tr}(AB^t) = \on{Tr}(AB)$. 
The intersection is the orbit of the symmetric group $\on{S}_n$ which acts by permuting the eigenvalues of $A$.
Then $f : \on{Sym}(n) \to \R$ is isotropic if and only if 
\[
	f(A) = F(a_1,\ldots,a_n), \quad A \in \on{Sym}(n),  
\]
for some unique symmetric function $F : \R^n \to \R$, 
where $a_1,\ldots,a_n$ are the eigenvalues of $A$ repeated according to their multiplicity. 
Isotropic functions are important in continuum mechanics, in particular, elasticity. 

It is well-known that the map that assigns to a real symmetric matrix $A$ its $n$-tuple of eigenvalues 
(e.g.\ in decreasing order) is Lipschitz (even difference-convex) but not $C^1$. 
Thus it is surprising that $f$ is smooth if and only if $F$ is smooth. 
This follows from a result of Glaeser \cite{Glaeser63F} (see also Schwarz \cite{Schwarz75} and Mather \cite{Mather77} for generalizations): 
every symmetric $F \in C^\infty(\R^n)$ can be written in the form
\[
	F(x) = G(\si_1(x),\ldots,\si_n(x)),
\]
where $G \in C^\infty(\R^n)$ and $\si_i$ are the elementary symmetric functions in $n$ variables. 
Consequently, $$f(A) = G((-1)^1\on{Tr} (\wedge^1 A) , \ldots, (-1)^n\on{Tr}( \wedge^n A))$$ is the composite of two $C^\infty$-maps. 
Glaeser's theorem for finite differentiability involves an unavoidable loss of regularity and can hence not be 
used to determine the regularity of $f$. Indeed, for a symmetric $C^{n r}$-function $F$ the function $G$ is in general only of class $C^r$; 
see Barban{\c{c}}on \cite{Barbancon72} and Rumberger \cite{Rumberger98}. 
Nevertheless it is true that, for any $r \in \N$, an isotropic function $f(A) = F(a_1,\ldots,a_n)$ is $C^r$ if and only if $F$ is $C^r$. 
This was proved by Ball \cite{Ball84} for $r=0,1,2,\infty$ and by Sylvester \cite{Sylvester:1985aa} for all $r = 0,1,2,\cdots,\infty$. 
Later \v{S}ilhav\'y \cite{Silhavy2000} gave a simple elementary proof of Sylvester's result and an inductive formula for the derivatives of $f$; 
see also Scheuer \cite{Scheuer:2018aa}.
The simpler H\"older case $C^{r,\al}$ for $0 < \al <1$ is already contained in \cite{Ball84}. 
For applications in elasticity see the discussion of the stored energy function in \cite[Section 6]{Ball84}.

In this paper we will show that the same phenomenon ``permanence of regularity'' between an isotropic function $f$ and its 
symmetric companion  $F$ holds for \emph{ultra\-differentiable functions}. 
These are $C^\infty$-function with certain growth restrictions for their iterated derivatives
which define the ultradifferentiable class. For instance, the real analytic class is defined by the Cauchy estimates. 
Modification of the Cauchy estimates in terms of a weight sequence gives rise to the classical Denjoy--Carleman classes (among 
them the Gevrey classes which are important in PDEs). 
Braun--Meise--Taylor classes arose from measuring the regularity in terms of prescribing the decay of the Fourier transform.

We will work in a very general framework for ultradifferentiable analysis which comprises all classically studied classes (notably, 
the aforementioned ones). 
Similarly as for finite differentiability the corresponding Glaeser theorem involves a strict loss of regularity and is hence not 
applicable; 
see Bronshtein\cite{Bronshtein86,Bronshtein87} and \cite{RainerDC}. 
It is worth mentioning that for Denjoy--Carleman classes we require 
that the weight sequence has moderate growth whereas in the case of Braun--Meise--Taylor classes our results apply 
for all standard weight functions. 
(Precise definitions are given in \Cref{sec:ultra}.)  
It does not matter for the problem whether the class is quasianalytic or not.
In particular, we get as a corollary that an isotropic function $f$ is real analytic if and only if	
its symmetric companion $F$ is real analytic. 
Furthermore we shall also deduce a version of the result for \emph{Gelfand--Shilov classes}, i.e., ultradifferentiable rapidly 
decreasing functions for which the defining bounds are global in contrast to the aforementioned regularity classes.

The results for isotropic functions (\Cref{isotropic}) will be special cases of the ultradifferentiable Chevalley restriction theorems
(\Cref{Chevalley} and \Cref{GelfandShilov}) 
that we shall prove in \Cref{sec:Chevalley}. 
The latter are formulated 
for Cartan decompositions of real semisimple Lie algebras of noncompact type; see the setting in \Cref{sec:Chevalley}.  
The proof follows closely the one given by Dadok \cite{Dadok:1982aa} for the $C^\infty$-case which is based on 
the analysis of the Laplace operator on invariant functions and a weak elliptic regularity result. 
We shall combine this analysis with lacunary regularity results for ultradifferentiable classes. 
These lacunary regularity results are reviewed and adapted to our general ultradifferentiable setting in \Cref{sec:lacunary}.   
For the Chevalley theorem in Gelfand--Shilov classes we will use their invariance under the Fourier transform.

\subsection*{Notation}
We will use multiindex notation with $D_j = - i \p_j$, where $i = \sqrt{-1}$, 
and $D^\al = D_1^{\al_1} D_2^{\al_2} \cdots D_n^{\al_n}$ such that the  
Fourier transform $$\widehat f(\xi) = \int f(x) e^{- i \langle x,\xi\rangle } \, dx$$ on the Schwartz class $\cS(\R^n)$ satisfies 
$\widehat {D_jf}(\xi) = \xi_j \widehat f(\xi)$ and $\widehat{x_j f(x)} = - D_j \widehat f$. 

\section{Ultradifferentiable functions} \label{sec:ultra}

\subsection{Denjoy--Carleman classes} \label{sec:DC}

Let $U \subseteq \R^n$ be open. 
Let $M=(M_k)$ be a positive sequence.
For $\rh>0$ and compact $K\subseteq U$ consider the seminorm
\[
	\|f\|^M_{K,\rh} := \sup_{\substack{x \in K\\\al\in \N^n}} \frac{|D^\al f(x)|}{\rh^{|\al|} M_{|\al|}}, \quad f \in \cC^\infty(U).  
\]
The \emph{Denjoy--Carleman class of Roumieu type} $\cE^{\{M\}}$ is defined by 
\[
	\cE^{\{M\}}(U) := \big\{f \in \cC^\infty(U) : \A  K \Subset U \E \rh >0 : \|f\|^M_{K,\rh} <\infty\big\}
\]
and the \emph{Denjoy--Carleman class of Beurling type}  $\cE^{(M)}$ by 
\[
	\cE^{(M)}(U) := \big\{f \in \cC^\infty(U) : \A  K \Subset U \A \rh >0 : \|f\|^M_{K,\rh} <\infty\big\},
\]
We endow these spaces with their natural locally convex topologies.
The study of Denjoy--Carleman classes started around 1900 with the work of E.\ Borel.

We shall assume that the sequence $M=(M_k)$ is
\begin{enumerate}
	\item logarithmically convex, i.e.\ $M_k^2 \le M_{k-1} M_{k+1}$ for all $k$, and satisfies
	\item $M_0 = 1 \le M_1$ and 
	\item $M_k^{1/k} \to \infty$.
\end{enumerate} 
In that case we say that $M$ is a \emph{weight sequence}. 
It is easy to see that for a weight sequence $M$ the sequence $M_k^{1/k}$ (and thus also $M$) 
is increasing and $M_{k}M_{\ell} \le M_{k+\ell}$ for all 
$k,\ell$.

Let $M=(M_k)$ and $N=(N_k)$ be positive sequences. Then boundedness of the sequence $(M_k/N_k)^{1/k}$ is a sufficient condition for the 
inclusions 
$\cE^{\{M\}} \subseteq \cE^{\{N\}}$ and $\cE^{(M)} \subseteq \cE^{(N)}$ (this means that the inclusions hold on all open sets). 
The condition is also necessary provided that $M$ satisfies (1), see \cite{Thilliez08} and \cite{Bruna80/81}. 
For instance, stability of the classes $\cE^{\{M\}}$ and $\cE^{(M)}$ by derivation is equivalent to boundedness of 
the sequence $(M_{k+1}/M_k)^{1/k}$ 
(for the necessity we assume that $M$ satisfies (1)). 
If $(M_k/N_k)^{1/k} \to 0$ then $\cE^{\{M\}} \subseteq \cE^{(N)}$, and conversely provided that $M$ satisfies (1). 
Hence sequences $M$ and $N$ satisfying (1) are called \emph{equivalent} if 
there is a constant $C>0$ such that $C^{-1} \le (M_k/N_k)^{1/k}\le C$; 
this is precisely the case if they defined the same Denjoy--Carleman classes.

Of particular importance in the theory of differential equations are the \emph{Gevrey classes} $\cG^s$, for $s \ge 1$. 
These are by definition 
the 
Roumieu type classes associated with the weight sequence $M_k= k!^s$, i.e., $\cG^s = \cE^{\{(k!^s)_k\}}$.  
For $s=1$ we get the class of real analytic functions $\cG^1 = \cE^{\{(k!)_k\}} = \cC^\om$ in the Roumieu 
case and the restrictions of entire functions $\cE^{((k!)_k)}$ in the Beurling case.
 
The inclusion $C^{\om} \subseteq \cE^{\{M\}}$ (as well as the inclusion $\cE^{((k!)_k)} \subseteq \cE^{(M)}$) is equivalent to the condition
\begin{enumerate}
	\item[(4)] $(k!/M_k)^{1/k}$ is bounded. 
\end{enumerate}
The inclusion $C^\om \subseteq \cE^{(M)}$ is equivalent to the stronger condition
\begin{enumerate}
	\item[($4'$)]  $(k!/M_k)^{1/k} \to 0$ as $k \to \infty$. 
\end{enumerate}

A positive sequence $M=(M_k)$ is said to have \emph{moderate growth} if 
\begin{enumerate}
	\item[(5)] $\exists C>0 \A k,\ell \in \N : M_{k+\ell} \le C^{k+\ell} M_k M_\ell$. 
\end{enumerate}
This condition entails that the corresponding classes $\cE^{\{M\}}$ and $\cE^{(M)}$ are stable by derivation.

Note that the Gevrey classes $\cG^s$, for $s\ge 1$, satisfy all of the conditions (1)--(5),
if $s>0$ also $(4')$ is satisfied.

\subsection{General ultradifferentiable classes}

By a \emph{weight matrix} we mean a   
family $\fM$ of weight sequences $M$ which is totally ordered with respect to the pointwise order relation on sequences. 
For a weight matrix $\fM$ and an open subset $U \subseteq \R^n$  
we consider the \emph{Roumieu class}
\begin{align*} 
\cE^{\{\fM\}}(U) &:= \big\{f \in \cC^\infty(U) : \A  K \Subset U \E M \in \fM \E \rh >0 : \|f\|^M_{K,\rh} <\infty\big\}
\end{align*}
and the \emph{Beurling class}
\begin{align*} 
	\cE^{(\fM)}(U) &:= \big\{f \in \cC^\infty(U) : \A  K \Subset U \A M \in \fM \A \rh >0 : \|f\|^M_{K,\rh} <\infty\big\}
\end{align*} 
with their natural locally convex topologies.
Clearly every Denjoy--Carleman class $\cE^{\{M\}}$ and $\cE^{(M)}$ is a ultradifferentiable class of this kind (then 
$\fM$ consists just of the weight sequence $M$). 

In the following we will consider weight matrices $\fM$ with additional properties which will depend 
on the type, i.e.\ Beurling or Roumieu, of the class:
\begin{enumerate}
	\item[($B$)] (Beurling case) For all $M \in \fM$ we have  
	\begin{gather} \tag{$B_1$}  
		(k!/ M_k)^{1/k} \to 0 \quad \text{ as } k \to \infty, 
		\\
		\tag{$B_2$} 
		\A M \in \fM \E L \in \fM \E C>0 \A k,\ell \in \N : L_{k+\ell} \le C^{k + \ell} M_k M_\ell.
	\end{gather}
	\item[($R$)] (Roumieu case) For all $M \in \fM$ we have that $(k!/ M_k)^{1/k}$ is bounded 
	and 
	\begin{gather} \tag{$R_1$} 
		(k!/ M_k)^{1/k} \quad \text{ is bounded, } 
		\\	\tag{$R_2$} 
		\A M \in \fM \E N \in \fM \E C>0 \A k,\ell \in \N : M_{k+\ell} \le C^{k + \ell} N_k N_\ell.
	\end{gather}
\end{enumerate} 
We will call a weight matrix $\fM$ [\emph{regular}] if it satisfies ($B$) in the Beurling case and ($R$) in the Roumieu case. 
Under these conditions the classes $\cE^{[\fM]}$ contain the class of real analytic functions and are stable under differentiation. 
We use square brackets $[~]$ in statements that hold in the Beurling case $(~)$ as well as in the Roumieu case $\{~\}$ 
under the respective assumptions.

For Denjoy--Carleman classes either of the conditions ($B_2$) and ($R_2$) reduces to the moderate growth condition
\ref{sec:DC}(5). 

All our results will apply for \emph{Braun--Meise--Taylor classes} $\cE^{[\om]}$. Here $\om$ 
is a \emph{weight function}, i.e., a continuous increasing functions $\om : [0,\infty) \to [0,\infty)$ with $\om(0) =0$ 
and 
$\lim_{t \to \infty} \om(t) = \infty$ such that 
\begin{enumerate}
   \item $\om(2t) = O(\om(t))$  as  $t \to \infty$, \label{om1}
   \item $\log t = o(\om(t))$ as  $t \to \infty$, and \label{om3}
   \item $\vh(t) := \om(e^t)$  is convex.  \label{om4}	
\end{enumerate}
The classes $\cE^{[\om]}$ are defined by 
\[
	\cE^{\{\om\}}(U) := \{f \in \cC^\infty(U) : \A  K \Subset U \E \rh >0 : \|f\|^\om_{K,\rh} <\infty\}
\]
and   
\[
	\cE^{(\om)}(U) := \{f \in \cC^\infty(U) : \A  K \Subset U \A \rh >0 : \|f\|^\om_{K,\rh} <\infty\}
\]
by means of the seminorms
\[
  \|f\|^\om_{K,\rh} := \sup_{x \in K,\,\al \in \N^n} |D^\al f(x)| \exp(-\tfrac{1}{\rh} \vh^*(\rh |\al|)), 
  \quad  f \in \cC^\infty(U),
\]
where $\vh^*(t) := \sup_{s\ge 0} \big(st-\vh(s)\big)$, for $t>0$, is the \emph{Young conjugate} of $\vh$.

These classes were originally introduced by Beurling \cite{Beurling61} and Bj\"orck \cite{Bjoerck66} in terms of 
decay properties of the Fourier transform. The description that we used above is due to Braun, Meise, and Taylor 
\cite{BMT90}.

Braun--Meise--Taylor classes $\cE^{[\om]}$ can be identified as classes $\cE^{[\fM]}$ for suitable weight matrices $\fM$.
In fact, by \cite{RainerSchindl12}, setting 
\[
	W^x_k  := \exp(\tfrac{1}{x} \vh^*(x k)), \quad \text{ for } k \in \N \text{ and } x >0, 
\]
defines a weight matrix $\fW = \{W^x : x >0\}$ 
such that
\[
	\cE^{[\om]} = \cE^{[\fW]} \quad \text{ algebraically and topologically.}
\] 
The weight matrix $\fW$ always satisfies ($B_2$) as well as ($R_2$); cf.\ \cite[(5.6)]{RainerSchindl12}. 
It fulfills ($B_1$) (resp.\ ($R_1$)) if and only if $\om(t) = o(t)$ (resp.\ $\om(t) = O(t)$) as $t \to \infty$; 
cf.\ \cite[Corollary 5.17]{RainerSchindl12}.

Due to \cite{BMM07}, 
$\cE^{[\om]} = \cE^{[M]}$ for some weight sequence $M$ if and only if 
  \begin{align*}
     \E H\ge 1 \A t\ge 0 : 
      2\om(t) \le \om(Ht) + H.   
  \end{align*} 
This is the case if and only if some (equivalently each) $W^x$ has moderate growth, and then   
$\cE^{[\om]} = \cE^{[W^x]}$ for all $x>0$.  

\subsection{Lacunary regularity} \label{sec:lacunary}

In this section we review some results of Liess \cite{Liess:1990aa} which are based on earlier work 
by Baouendi and Metivier \cite{Baouendi:1982aa} and Bolley, Camus, and Metivier \cite{Bolley:1991ab}.
We shall need only one direction of the characterization of Liess (in a special case) and 
try to get by with the minimal assumptions on the weight sequences, but we will otherwise not strive for 
utmost generality. 
Moreover we show the result in the framework of weight matrices.

Let $\fM$ be a weight matrix and let $P$ be a linear partial differential operator with analytic coefficients of order $m$ 
on an open subset $U$ of $\R^n$. Moreover, let $k=(k_j)$ be a strictly increasing sequence of positive integers. 
Then $\cE^{[\fM]}_{P,k}(U)$ denotes the space of all $f \in C^\infty(U)$ such that for all $K \Subset U$ 
there exist $M \in \fM$ and $C,\rh >0$ such that (resp.\ for all $M \in \fM$  and all $\rh >0$ there is $C>0$ such that) 
\[
		\|P^{k_j} f\|_{L^2(K)} \le C \rh^{m k_j} M_{m k_j} \quad \text{ for all } j \in \N. 
\]

\begin{lemma} \label{lem:A1}
	Assume that $\fM$ is a [regular] weight matrix and that
	$k=(k_j)$ satisfies 
\begin{equation} \label{eq:reccurence}
 	a k_j \le k_{j+1} \le a k_j + b \quad \text{ for all } j, 
 \end{equation} 
	for some $a\in \N_{\ge 1}$ and $b \in \N$. 
	Then for $f \in C^\infty(U)$ the following conditions are equivalent:
	\begin{enumerate}
	 	\item $f \in \cE^{[\fM]}(U)$.
	 	\item for all $K \Subset U$ 
there exist $M \in \fM$ and $C,\rh >0$ such that (resp.\ for all $M \in \fM$  and all $\rh >0$ there is $C>0$ such that) 
\[
		\|d_v^{k_j} f\|_{L^2(K)} \le C \rh^{k_j} M_{k_j} \quad \text{ for all } j \in \N \text{ and all } v \in S^{n-1}, 
\] 
	 \end{enumerate} 	
	where $d_v f(x) = \p_t|_{t=0} f(x+tv)$ is the directional derivative.
\end{lemma}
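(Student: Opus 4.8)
The plan is to prove the two implications separately; $(1)\Rightarrow(2)$ is elementary, whereas $(2)\Rightarrow(1)$ carries all the weight and is where the regularity of $\fM$ and the recurrence \eqref{eq:reccurence} are used. For $(1)\Rightarrow(2)$ I would expand the iterated directional derivative by the multinomial theorem,
\[
	d_v^{k}f = \sum_{|\al|=k}\frac{k!}{\al!}\,v^\al\,\p^\al f ,
\]
and estimate $|d_v^{k}f(x)|\le n^{k}\max_{|\al|=k}|\p^\al f(x)|$, using $|v^\al|\le 1$ for $v\in S^{n-1}$ and $\sum_{|\al|=k}k!/\al!=n^{k}$. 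Integrating over $K$ and inserting the defining bound $|\p^\al f|\le C\rh^{|\al|}M_{|\al|}$ of $\cE^{[\fM]}$ gives $\|d_v^{k}f\|_{L^2(K)}\le C\,|K|^{1/2}(n\rh)^{k}M_{k}$ uniformly in $v$, which is $(2)$ after rescaling $\rh$; in the Beurling case the same computation works for every $\rh$.

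For $(2)\Rightarrow(1)$ the first step is a polarization identity recovering each partial derivative of order $k$ from the $k$-th directional derivatives. Writing $\p^\al=\prod_{l}(w_l\cdot\nabla)$ with standard basis vectors $w_1,\dots,w_k$ chosen according to $\al$, the symmetric polarization formula yields
\[
	\p^\al=\frac{1}{k!}\sum_{\emptyset\ne S\subseteq\{1,\dots,k\}}(-1)^{k-|S|}\,\|u_S\|^{k}\,d_{v_S}^{k},\qquad u_S=\sum_{i\in S}w_i,\ v_S=u_S/\|u_S\| .
\]
Since $\|u_S\|\le k$ and there are fewer than $2^{k}$ terms, the coefficients are bounded by $2^{k}k^{k}/k!\le(2e)^{k}$, so $(2)$ forces $\|\p^\al f\|_{L^2(K)}\le C(2e\,\rh)^{k_j}M_{k_j}$ for all $|\al|=k_j$. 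In particular one controls the Laplacian iterates $\|\De^{m}f\|_{L^2(K)}$ at the lacunary orders $2m\in\{k_j\}$ (and $\|\nabla\De^{m}f\|$ when $k_j$ is odd); thus the uniformity over $v\in S^{n-1}$ compensates for the fact that a single $d_v$ is not elliptic.

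It then remains to pass from these lacunary $L^2$ iterate bounds to the full family of bounds defining $\cE^{[\fM]}$. Here I would invoke the elliptic a priori estimate $\|g\|_{H^{2}(K')}\le C(\|\De g\|_{L^2(K)}+\|g\|_{L^2(K)})$ for the Laplacian and iterate it along a chain of nested compacts $K=K_0\supset K_1\supset\cdots$, in the spirit of Baouendi--M\'etivier \cite{Baouendi:1982aa} and Bolley--Camus--M\'etivier \cite{Bolley:1991ab}, while keeping track of the geometric growth of the constants. Applying this to $g=\De^{m_j}f$ reconstructs the intermediate iterates $\|\De^{i}f\|_{L^2}$ for $m_j\le i\le m_{j+1}$ out of the two neighbouring lacunary levels; the two-sided recurrence $a k_j\le k_{j+1}\le a k_j+b$ bounds each gap by $\sim(a-1)k_j$, so only geometrically many bootstrapping steps with geometrically growing constants occur. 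A Sobolev embedding $H^{s}\hookrightarrow C^{0}$ with $s>n/2$ finally turns the resulting $L^2$ derivative bounds into the sup-norm bounds of $\cE^{[\fM]}$, the constant index shift being harmless.

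I expect the main obstacle to be the bookkeeping of the weight-sequence factors during this reconstruction: filling a multiplicative gap of size $\sim a$ naively costs a power of the sequence, and the point is that the moderate growth conditions $(R_2)$ in the Roumieu case and $(B_2)$ in the Beurling case are precisely what let one absorb the products of $M_{k_j}$ and $M_{k_{j+1}}$ that arise into a single sequence at the cost of only a geometric constant and of enlarging $M$ to some $N\in\fM$ (respectively shrinking, in the Beurling case). Matching the $(B)$ and $(R)$ quantifier patterns while carrying out this absorption is the delicate part, and it is exactly what the hypothesis that $\fM$ be [regular] is designed to make possible.
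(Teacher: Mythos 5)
Your direction (1)$\Rightarrow$(2) and your polarization identity are both correct; the paper uses essentially the same polarization bound (via \cite[Lemma 7.3(1)]{KM97}), only as the \emph{last} step of the argument rather than the first. The genuine gap is in your third step, which is where all the difficulty of the lemma sits. The a priori estimate $\|g\|_{H^2(K')}\le C(\|\De g\|_{L^2(K)}+\|g\|_{L^2(K)})$ cannot ``reconstruct the intermediate iterates'': applied to $g=\De^i f$ with $m_j\le i<m_{j+1}$ it gives $\|\De^{i+1}f\|_{L^2(K')}\le nC\big(\|\De^{i+1}f\|_{L^2(K)}+\|\De^{i}f\|_{L^2(K)}\big)$, so every application has on its right-hand side precisely the unknown quantities you are trying to control, with no small factor that would let you absorb them; the estimate propagates information from higher order to lower order, not from the two known lacunary endpoints into the gap. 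What is needed is a genuine two-endpoint \emph{interpolation} inequality on nested domains, and obtaining one with usable constants is a nontrivial theorem (the actual content of Liess and Bolley--Camus--M\'etivier), not a corollary of the basic elliptic estimate. Your constant bookkeeping is also not right even granting some per-step inequality: filling a gap of length $\sim(a-1)k_j$ inside a fixed pair $K'\Subset K$ forces nested cutoffs shrinking like $1/k_j$ per step, so the accumulated loss is of size $k_j^{ck_j}$ --- factorial type, not geometric. Such losses can only be absorbed because $M_\ell$ dominates $\ell!$ up to geometric factors, i.e.\ by conditions $(R_1)$/$(B_1)$, which your proposal never invokes; attributing all the absorption to $(R_2)$/$(B_2)$ misses this essential point.

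The paper's proof supplies exactly the missing tool, and in a way that avoids the Laplacian altogether: for fixed $v\in S^{n-1}$ choose coordinates with $v=e_1$ and apply Liess's one-dimensional interpolation inequality \eqref{eq:interpolation},
\[
	\|D_1^\ell u\|_{L^2(U')} \le C(U',U'')^{k}\,\|u\|_{L^2(U'')}^{1-\ell/k}\Big(k^\ell\|u\|_{L^2(U'')}^{\ell/k}+\|D_1^{k}u\|_{L^2(U'')}^{\ell/k}\Big),
\]
with $k=k_{j+1}$, which fills the whole gap $k_j\le\ell\le k_{j+1}$ in one stroke with constant $C^{k}$ and a single factorial-type loss $k_{j+1}^\ell\le A^\ell\ell^\ell\le(A')^\ell M_\ell$, absorbed by $(R_1)$ (resp.\ $(B_1)$); the recurrence \eqref{eq:reccurence} together with $(R_2)$ (resp.\ $(B_2)$) then yields $M_{k_{j+1}}^{1/k_{j+1}}\lesssim (N'_\ell)^{1/\ell}$ for some $N'\in\fM$, so condition (2) holds for the full sequence $k_j=j$; only then does one polarize and apply the Sobolev inequality. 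Note also that your detour through $\De$ creates the parity nuisance you mention (a sequence like $k_j=3^j$ is entirely odd), which the one-dimensional approach renders irrelevant. Finally, if you insist on routing the argument through Laplacian iterates, you would need a lacunary-iterates theorem for elliptic operators, i.e.\ essentially \Cref{lem:A2} --- but the paper derives that \emph{from} \Cref{lem:A1} (together with the cutoff estimate of \cite{Baouendi:1982aa}), and in the generality of arbitrary [regular] weight matrices it is not available off the shelf in \cite{Baouendi:1982aa} or \cite{Bolley:1991ab}.
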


\begin{proof}	
	Only the implication (2) $\Rightarrow$ (1) requires an argument. 
	It follows from the following interpolation formula (see \cite[(5)]{Liess:1990aa}): 
	if $U' \Subset U'' \Subset U$ are open subsets of $\R^n$ and 
	 $\ell \le k$, then for $i = 1,\ldots,n$,  
\begin{equation} \label{eq:interpolation}
	\|D_i^\ell u\|_{L^2(U')} 
	\le C(U',U'')^k  \|u\|_{L^2(U'')}^{1-\ell/k} \Big(k^\ell \|u\|_{L^2(U'')}^{\ell/k} + \|D_i^k u\|_{L^2(U'')}^{\ell/k} \Big).
\end{equation}
	For each fixed $v \in S^{n-1}$ we may choose a basis in which $v$ is the first basis vector.  
	Now it suffices to choose $j$ such that $k_j \le \ell \le k_{j+1}$ and to use \eqref{eq:interpolation} for 
	$k = k_{j+1}$. In the Roumieu case  
	the conditions \eqref{eq:reccurence} and ($R_2$) guarantee that there exist $N,N' \in \fM$ such that 
	\[
		M_{k_{j+1}} \le M_{a k_j +b} \le C^{a k_j +b} N_b N_{a k_j} \le C^{a k_j +b + a(a+1)k_j/2} N'_b (N'_{k_j})^a,
	\]
	and hence using again \eqref{eq:reccurence} we conclude 
	\[
		M_{k_{j+1}}^{1/k_{j+1}} \lesssim (N'_{k_{j}})^{1/k_{j}} \le (N'_{\ell})^{1/\ell}. 	
	\]
	Moreover $k_{j+1}^\ell \le A^\ell k_j^\ell \le A^\ell \ell^\ell \le (A')^\ell M_\ell$ for all $M \in \fM$ and 
	suitable constants $A,A'$, since $(k!/M_k)^{1/k}$ is bounded by ($R_1$).
	In the Beurling case we use ($B$) instead of ($R$) in a similar way. 
	Since $(k!/M_k)^{1/k} \to 0$ in this case, we find that 
	$k_{j+1}^\ell \le  \ep^\ell M_\ell$ for all $M \in \fM$ and all $\ep>0$. 

	In any case we may conclude that (2) actually holds for the sequence $k_j =j$. 
	By the polarization formula \cite[Lemma 7.3(1)]{KM97}, we have 
	\[
		\|d^j f(x)\|_{L_j} \le (2e)^j \sup_{v \in S^{n-1}} |d_v^j f(x)|,
	\]
	where $\|d^j f(x)\|_{L_j}$ denotes the operator norm of the Fr\'echet derivative of order $j$ of $f$ at $x$. 
	Together with the Sobolev inequality and the fact that $\fM$ is stable by derivation (in order to 
	switch from $L^2$- to $L^\infty$-estimates) 
	we find that $f \in \cE^{[\fM]}(U)$. 
\end{proof}

\begin{proposition} \label{lem:A2}
	Let $P$ be some elliptic linear partial differential operator of order $m$ with real analytic coefficients on an open set $U \subseteq \R^n$. 
Let $k=(k_j)$ be a strictly increasing sequence of positive integers satisfying \eqref{eq:reccurence}. 
Assume that $\fM$ is a [regular] weight matrix.
	Then 
	\begin{equation} 
		\cE^{[\fM]}(U) = \cE^{[\fM]}_{P,k}(U).
	\end{equation}
\end{proposition}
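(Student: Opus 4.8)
The plan is to reduce the elliptic statement to the directional-derivative characterization of \Cref{lem:A1}. The content of \Cref{lem:A1} is that membership in $\cE^{[\fM]}(U)$ is equivalent to controlling the lacunary sequence of directional derivatives $d_v^{k_j} f$ in $L^2$, uniformly over $v \in S^{n-1}$, by the bounds $C \rh^{k_j} M_{k_j}$. So it suffices to show that the condition defining $\cE^{[\fM]}_{P,k}(U)$, namely $\|P^{k_j} f\|_{L^2(K)} \le C \rh^{m k_j} M_{m k_j}$, is equivalent to the condition (2) of \Cref{lem:A1} (for possibly a rescaled lacunary sequence $m k_j$, which still satisfies a recurrence of the form \eqref{eq:reccurence} with the constants scaled accordingly). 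One inclusion is immediate: if $f \in \cE^{[\fM]}(U)$, then bounding $P^{k_j} f$ — a differential operator of order $m k_j$ with analytic coefficients — against the Denjoy--Carleman seminorms of $f$ gives the required $L^2$ estimate, because the class is stable under differentiation and multiplication by analytic functions. The real work is the reverse direction.

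For the reverse direction, the key tool is elliptic regularity with control of constants. Concretely, I would use an \textbf{a priori elliptic estimate} of the form
\[
	\|u\|_{H^{m}(U')} \le C \big( \|P u\|_{L^2(U'')} + \|u\|_{L^2(U'')}\big), \qquad U' \Subset U'',
\]
and, more importantly, its \emph{iterated} version with explicit dependence of the constant on the order of iteration. The standard fact (this is exactly the ``weak elliptic regularity'' input that Dadok uses in the $C^\infty$-case) is that applying the basic estimate $k$ times yields a bound in which the full Sobolev norm $\|u\|_{H^{mk}}$ is controlled by $\|P^k u\|_{L^2}$ together with lower-order terms, and the constants grow at most geometrically in $k$ (a factor $C^{k}$) together with a factor that is at worst polynomial of bounded degree per step — the precise tracking being the analogue of the interpolation inequality \eqref{eq:interpolation} used inside \Cref{lem:A1}. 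Since $\fM$ is [regular], the factor $C^{mk_j}$ is harmless: by ($R_2$) (resp.\ ($B_2$)) one passes from $C^{mk_j} M_{m k_j}$ to a bound of the form $\rh^{m k_j} N_{m k_j}$ for some $N \in \fM$ (resp.\ for all $N$), exactly as in the submultiplicative manipulations carried out in the proof of \Cref{lem:A1}.

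Having controlled $\|f\|_{H^{m k_j}(K)}$ by $C \rh^{m k_j} M_{m k_j}$, I would extract from the Sobolev norm the $L^2$-norms of pure directional derivatives $\|d_v^{m k_j} f\|_{L^2(K')}$: since $d_v^{m k_j} f$ is a constant-coefficient differential operator of order $m k_j$ applied to $f$, with coefficients bounded uniformly in $v \in S^{n-1}$ by multinomial coefficients, its $L^2$-norm is dominated (up to a factor $n^{m k_j}$, absorbed again via regularity of $\fM$ and logarithmic convexity) by $\|f\|_{H^{m k_j}}$. This verifies condition (2) of \Cref{lem:A1} for the lacunary sequence $(m k_j)_j$, whence $f \in \cE^{[\fM]}(U)$, completing the equivalence.

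The main obstacle I anticipate is \emph{quantitative} rather than conceptual: one must establish the iterated elliptic estimate with the constant growing only like $C^k$ times a controlled power, so that the resulting sequence bounds stay inside the class after applying ($R_2$)/($B_2$). Qualitative elliptic regularity is standard, but the ultradifferentiable conclusion hinges entirely on having no faster-than-geometric loss at each iteration step; this is precisely where the hypothesis that $P$ has \emph{analytic} coefficients and is \emph{elliptic} enters, and where I would lean on the lacunary interpolation machinery of Liess, Baouendi--Metivier and Bolley--Camus--Metivier already invoked for \Cref{lem:A1}, together with the submultiplicativity encoded in the [regular] weight-matrix conditions.
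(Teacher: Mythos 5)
Your overall architecture matches the paper's: the inclusion $\cE^{[\fM]}(U)\subseteq\cE^{[\fM]}_{P,k}(U)$ is routine, and the converse is reduced to the directional-derivative criterion of \Cref{lem:A1} applied to a (shifted) lacunary sequence. But the central quantitative step of your argument rests on a claim that is not a ``standard fact'' and fails as stated: naive iteration of the interior a priori estimate $\|u\|_{H^m(U')}\le C(\|Pu\|_{L^2(U'')}+\|u\|_{L^2(U'')})$ does \emph{not} control $\|u\|_{H^{mk}}$ by $\|P^ku\|_{L^2}$ plus lower-order terms with constants growing only geometrically in $k$. Each iteration forces a passage to a strictly smaller domain, so $k$ iterations between fixed $U'\Subset U''$ use nested domains with separations of order $1/k$, and the constant of each step blows up like a power of $k$; moreover, the constant in the order-$s$ interior estimate depends on the $C^s$-norms of the coefficients of $P$, which for analytic coefficients themselves grow like $C^{s}s!$. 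The accumulated loss is therefore at least factorial, and with a truly naive iteration much worse than factorial. Overcoming exactly this is the substance of Kotake--Narasimhan-type theorems on analytic vectors, and the lacunary version (where only the powers $P^{k_j}$ are controlled) is precisely the content of Baouendi--Metivier's Theorem 1.2; it cannot be obtained by ``applying the basic estimate $k$ times.'' A secondary gap: you propose to absorb the loss via ($R_2$)/($B_2$), but a geometric factor $C^{mk_j}$ is absorbed trivially into $\rho$; what actually needs absorbing is a factorial defect, and that requires ($R_1$)/($B_1$), i.e. $k!\le C^kM_k$, which your write-up never invokes.

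For comparison, the paper sidesteps iteration entirely by quoting Baouendi--Metivier directly: there exist $k$-dependent cutoffs $\chi_k\in C^\infty_c(U'')$, equal to $1$ on $U'$ (Ehrenpreis-type cutoffs with controlled derivatives up to order $\sim k$; a fixed cutoff would not do), such that
\[
|\xi|^{mk}\,|\widehat{(\chi_ku)}(\xi)|\le C^{k+1}\Big(\|P^ku\|_{L^2(U'')}+k!^m\|u\|_{L^2(U'')}\Big),
\]
where the unavoidable $k!^m$ is then absorbed using ($R_1$)/($B_1$). From the resulting decay $|\widehat{(\chi_{k_j}u)}(\xi)|\le C\rho^{mk_j}|\xi|^{-mk_j}M_{mk_j}$ one recovers, by Fourier inversion, $L^\infty$-bounds on $d_v^{mk_j-n-1}u$ uniformly in $v\in S^{n-1}$ (the loss of $n+1$ powers of $|\xi|$ makes the integral converge), shifts the weight-sequence index from $mk_j$ to $mk_j-n-1$ via ($R_2$)/($B_2$), and applies \Cref{lem:A1} to the sequence $j\mapsto mk_j-n-1$. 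So your plan can be completed, but only by replacing your asserted iterated estimate with this cited Fourier-side estimate (or an equivalent Morrey--Nirenberg-style induction proved from scratch); as written, the key estimate is assumed rather than available.
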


\begin{proof}
	The inclusion $\cE^{[\fM]}(U) \subseteq \cE^{[\fM]}_{P,k}(U)$ is clear. 

	For the converse inclusion we follow the arguments of \cite[Proposition 3.3]{Bolley:1991ab}. 
	Let $U' \Subset U'' \Subset U$ be open subsets of $\R^n$. By \cite[Theorem 1.2]{Baouendi:1982aa},
	there is a constant $C>0$ such that for all $k \in \N$ there is $\ch_k \in C^\infty_c(U'')$ with $\ch_k = 1$ on $U'$ 
	and with values in $[0,1]$ such that for all $u \in C^\infty(\ol {U''})$
	\begin{equation*}
	 	|\xi|^{m k} |\widehat{(\ch_k u)}(\xi)| \le C^{k+1} \Big( \|P^k u\|_{L^2(U'')} + k!^m \|u\|_{L^2(U'')}\Big), \quad \xi \in \R^n, k \in \N. 
	 \end{equation*} 
	Thus if $u \in \cE^{[\fM]}_{P,k}(U)$ then there exist $M \in \fM$ and $C,\rh>0$  
	(resp.\ for all $M \in \fM$ and all $\rh>0$ there is $C$) such that
	\[
		|\widehat{(\ch_{k_j} u)}(\xi)| \le C  \rh^{m k_j}|\xi|^{-m k_j} M_{m k_j}.
	\]
	Fix $v \in S^{n-1}$.  
	In view of 
	\begin{align*}
		(-i d_v)^{mk_j - n-1} u(x) = 
		\frac{1}{(2\pi)^n}  \int e^{ i \langle x ,\xi \rangle} \langle v,\xi\rangle^{mk_j - n-1} \widehat{\ch_{k_j} u}(\xi)\, d\xi
	\end{align*}
	we conclude that, for all $j$,
	\begin{align*}
		\|d_v^{mk_j - n-1} u\|_{L^\infty(U')} \le C \rh^{mk_j} M_{m k_j} \le \tilde C {\tilde \rh}^{mk_j} N_{m k_j-n-1},
	\end{align*}
	where we used ($R_2$) in the Roumieu case. In the Beurling case we use ($B_2$) instead.
	The constants $\tilde C, \tilde \rh$ are independent of $v \in S^{n-1}$. 
	It remains to  apply \Cref{lem:A1} for the sequence $j \mapsto mk_j - n-1$.
\end{proof}

\begin{remark}
	It is possible to just work with $\p_1,\ldots,\p_n$ instead of all directional derivatives $d_v$. 
	Then the equivalence of (1) and (2) in \Cref{lem:A1} would read:
	\[
		\cE^{[\fM]}(U) = \bigcap_{i=1}^n \cE^{[\fM]}_{\p_i,k}(U).
	\]
	The proof is similar, but in the end one has to show the stronger statement
	\[
		\bigcap_{i=1}^n \cE^{[\fM]}_{\p_i,k}(U) = \bigcap_{i=1}^n \cE^{[\fM]}_{\p_i,(j)_j}(U).
	\] 
\end{remark}

\subsection{Gelfand--Shilov classes}

Let $\fM$ be a weight matrix. We consider the \emph{Gelfand--Shilov classes} $\cS^{[\fM]}(\R^n)$ consisting of all 
$f \in \cS(\R^n)$ such that there exist $M \in \fM$ and $\rh>0$ with (resp.\ for all $M \in \fM$ and all $\rh>0$)
\[ \sup_{k,\ell \in \N}    
\sup_{|\al| = \ell} \frac{\| |x|^k  D^\al f \|_{L^\infty(\R^n)}}{\rh^{k + \ell} M_k M_\ell} < \infty. 
\]
The next lemma is a slight generalization of \cite[Theorem 2.3]{Chung:1996aa}.

\begin{lemma} \label{lem:A3}
	Assume that $\fM$ is a [regular] weight matrix.
	Then the following are equivalent.
	\begin{enumerate}
		\item $f \in \cS^{[\fM]}(\R^n)$.
		\item There exist $M\in \fM$ and constants $C,\rh,\si >0$ (for all $M \in \fM$ and for all $\rh,\si >$ there is $C>0$) such that 
		\[
			\sup_x |x^\al f(x)| \le C \rh^{|\al|} M_{|\al|} \quad \text{ and } \quad \sup_x |D^\be f(x)| \le C \si^{|\be|} M_{|\be|} 
		\]  
		for all $\al,\be \in \N^n$.
		\item There exist $M\in \fM$ and constants $C,\rh,\si >0$ (for all $M \in \fM$ and for all $\rh,\si >$ there is $C>0$) such that 
		\[
			\sup_x |x^\al f(x)| \le C \rh^{|\al|} M_{|\al|} \quad \text{ and } \quad \sup_{\xi} |\xi^\be \widehat f(\xi)| \le C \si^{|\be|} M_{|\be|} 
		\]  
		for all $\al,\be \in \N^n$. 
	\end{enumerate}
\end{lemma}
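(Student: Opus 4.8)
The plan is to treat condition (1) as the ``joint'' estimate and conditions (2), (3) as the two ``separated'' estimates, and to prove the two easy implications $(1)\Rightarrow(2)$, $(1)\Rightarrow(3)$ together with the two substantial converses $(2)\Rightarrow(1)$ and $(3)\Rightarrow(1)$. Throughout I use that $f \in \cS(\R^n)$, so every integration by parts is justified without boundary terms and the Fourier transform is an isomorphism of $\cS(\R^n)$. The implication $(1)\Rightarrow(2)$ is pure specialization: taking $k=0$ gives $\sup_x|D^\be f| \le C\rh^{|\be|}M_{|\be|}$, and taking $\al=0$ gives $\sup_x |x|^{k}|f|\le C\rh^{k}M_{k}$, whence $\sup_x|x^\ga f| \le C\rh^{|\ga|}M_{|\ga|}$ since $|x^\ga|\le|x|^{|\ga|}$ and $M_0=1$. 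For $(1)\Rightarrow(3)$ the first estimate is obtained the same way, while for the second I write $\xi^\be\widehat f = \widehat{D^\be f}$, so $\sup_\xi|\xi^\be\widehat f| \le \|D^\be f\|_{L^1(\R^n)}$, and I bound the $L^1$-norm by inserting the integrable weight $(1+|x|^2)^{-N_0}$ with $2N_0>n$ fixed; this costs only the joint quantities $\sup_x|x^\eta D^\be f|$ with $|\eta|\le 2N_0$, which (1) controls, and a shift of the index by the bounded amount $2N_0$, absorbed by ($R_2$) (resp.\ ($B_2$)). The same weight trick gives the elementary ``$L^1$- and $L^2$-conversions'' (e.g.\ $\sup_x|x^\ga f|\le C\rh^{|\ga|}M_{|\ga|}$ implies $\|x^\ga f\|_{L^1}\le C'\rh'^{|\ga|}M'_{|\ga|}$) that I will use repeatedly.

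The substance is producing the joint estimate (1) from the separated data, and I would do this in $L^2$. Starting from (2), I first pass to $L^2$-bounds: $\|x^\ga f\|_{L^2}$ follows from $\sup_x|x^\ga f|$ by the weight trick, and $\|D^\al f\|_{L^2}$ follows \emph{non-circularly} from $\|D^\al f\|_{L^2}^2 = \pm\langle f, D^{2\al}f\rangle \le \|f\|_{L^1}\,\sup_x|D^{2\al}f|$, where $\|f\|_{L^1}$ comes from the decay of $f$ and $\sup_x|D^{2\al}f|$ from (2). Starting from (3) the derivative bounds are even more direct: by Plancherel $\|D^\al f\|_{L^2}=\|\xi^\al\widehat f\|_{L^2}$, and the latter follows from $\sup_\xi|\xi^\be\widehat f|$ and the decay of $\widehat f$ via the weight trick. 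In either case I now have $\|x^\ga f\|_{L^2}\le C\rh^{|\ga|}M_{|\ga|}$ and $\|D^\al f\|_{L^2}\le C\si^{|\al|}M_{|\al|}$. The combining step is the integration-by-parts identity
\[
	\|x^\ga D^\al f\|_{L^2}^2 = \int_{\R^n} x^{2\ga}\,|D^\al f|^2 = \pm\int_{\R^n} f\, D^\al\!\big(x^{2\ga}\,\ol{D^\al f}\big),
\]
whose Leibniz expansion together with the Cauchy--Schwarz inequality yields
\[
	\|x^\ga D^\al f\|_{L^2}^2 \le \sum_{\de\le\al}\binom{\al}{\de}\frac{(2\ga)!}{(2\ga-\de)!}\,\|x^{2\ga-\de}f\|_{L^2}\,\|D^{2\al-\de}f\|_{L^2}.
\]
The decisive point is that the right-hand side involves only the \emph{separated} $L^2$-quantities, so there is no circularity. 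Substituting the bounds and taking square roots gives a joint $L^2$-estimate $\|x^\ga D^\al f\|_{L^2}\le C\rh'^{|\ga|+|\al|}M_{|\ga|}M_{|\al|}$, and a final Sobolev embedding $L^2\hookrightarrow L^\infty$ applied to $x^\ga D^\al f$ (expanding by Leibniz exactly as at the end of the proof of \Cref{lem:A1}) upgrades this to the joint $L^\infty$-estimate (1).

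The main obstacle is the weight-sequence bookkeeping in this last chain, and it is precisely here that regularity of $\fM$ is used. Two kinds of factors must be absorbed. First, index doubling: $M_{|2\ga-\de|}\le M_{2|\ga|}$ must be controlled by $M_{|\ga|}^2$, which is exactly ($R_2$) (resp.\ ($B_2$)), at the cost of passing to another sequence of $\fM$ and a factor $C^{|\ga|}$; the same condition swallows the bounded shifts coming from the weight trick and the Sobolev step. Second, the combinatorial factor $\binom{\al}{\de}\tfrac{(2\ga)!}{(2\ga-\de)!}\le 2^{|\al|}(2|\ga|)^{|\de|}$: here one uses that by ($R_1$) one has $k!\le C^k M_k$, so that $M$ grows at least like $k!$ and the weight sequence can swallow powers of the index (concretely $(2|\ga|)^{|\de|}M_{2|\ga|-|\de|}\le C^{|\ga|}M_{2|\ga|}$ via log-convexity and $M_{2|\ga|}^{1/(2|\ga|)}\gtrsim|\ga|$), while the number of multi-indices $\de\le\al$ is at most $2^{|\al|}$. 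In the Beurling case one argues identically with ($B_1$),($B_2$) in place of ($R_1$),($R_2$), the stronger hypothesis $k!/M_k\to0$ giving more room, and the quantifiers ``for all $M\in\fM$, for all $\rh,\si$'' are preserved at each step. Assembling $(1)\Rightarrow(2)$, $(1)\Rightarrow(3)$, $(2)\Rightarrow(1)$ and $(3)\Rightarrow(1)$ then closes the equivalence.
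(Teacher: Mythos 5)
Your proof is correct, but note that the paper itself proves nothing here: its entire proof is the sentence that it is ``straightforward to adapt the proof of \cite[Theorem 2.3]{Chung:1996aa}'', deferring both the analytic core and the weight-matrix bookkeeping to the literature. What you have written out is essentially that classical argument (the separated-to-joint passage goes back to Kashpirovskii's proof that $S_\al^\be=S_\al\cap S^\be$, which is what Chung--Chung--Kim generalize), carried out self-containedly in the $\cE^{[\fM]}$-setting: the identity $\|x^\ga D^\al f\|_{L^2}^2=\langle f,D^\al(x^{2\ga}\,\ol{D^\al f})\rangle$, Leibniz plus Cauchy--Schwarz so that only \emph{separated} $L^2$-quantities appear, the non-circular derivation of the separated $L^2$-bounds (your $\|D^\al f\|_{L^2}^2\le\|f\|_{L^1}\sup|D^{2\al}f|$ trick in case (2), Plancherel in case (3)), and a Sobolev embedding at the end. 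You also correctly isolate exactly where regularity of $\fM$ enters: index doubling and bounded shifts via $(R_2)$/$(B_2)$, and absorption of the polynomial factors via $(R_1)$/$(B_1)$ together with log-convexity. So this is the same mathematical route the paper's citation points to, but actually executed; the paper's approach buys brevity, yours buys verifiability and in particular substantiates the claim that the adaptation to weight matrices is ``straightforward''.

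Three small glosses, none fatal. (a) In the Leibniz expansion you should note that only multi-indices $\de\le 2\ga$ (besides $\de\le\al$) contribute, since $D^\de x^{2\ga}=0$ otherwise; this guarantees $|\de|\le 2|\ga|$, which your absorption $(2|\ga|)^{|\de|}M_{2|\ga|-|\de|}\le C^{|\ga|}M_{2|\ga|}$ genuinely needs --- with it, the inequality follows cleanly from $M_j\le M_k^{j/k}$ for $j\le k$ (log-convexity) and $M_k^{1/k}\ge k!^{1/k}/C\ge k/(eC)$ (from $(R_1)$, resp.\ $(B_1)$). (b) In $(1)\Rightarrow(3)$ no appeal to $(R_2)$/$(B_2)$ is actually needed: the joint bound defining $\cS^{[\fM]}$ is already in product form $M_kM_\ell$, so the weight $(1+|x|^2)^{-N_0}$ costs only a constant factor $M_{2N_0}\max(1,\rh)^{2N_0}$; by contrast, your use of the weight trick on $\sup_x|x^\ga f|$ in $(2)\Rightarrow(1)$ does produce index shifts, so there the appeal is correct. (c) At the very end one must still convert the monomial bounds $\sup_x|x^\ga D^\al f|$ into the $\sup_x|x|^k|D^\al f|$ bounds of the definition (multinomial expansion of $|x|^{2m}$, and $t^k\le t^{k-1}+t^{k+1}$ for odd $k$, with one more application of $(R_2)$/$(B_2)$); you gloss over this, but it is routine.
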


\begin{proof}
	It is straightforward to adapt the proof of \cite[Theorem 2.3]{Chung:1996aa}.
\end{proof}

An obvious consequence of the lemma is that $\cS^{[\fM]}(\R^n)$ is invariant under the Fourier transform:
$f \in \cS^{[\fM]}(\R^n)$ if and only if $\widehat f \in \cS^{[\fM]}(\R^n)$.

\section{Ultradifferentiable Chevalley theorems} \label{sec:Chevalley}

\subsection{The setting}

The following facts can be found in \cite{Helgason:1962aa}; see also \cite{Dadok:1982aa} whose arguments we follow closely.

Let $\fg$ be a real semisimple Lie algebra of noncompact type and let $\fg = \fk \oplus \fp$ be 
a Cartan decomposition. 
Then $\fk$ is a subalgebra of $\fg$ which is the Lie algebra of the maximal compact subgroup $K$ of 
the adjoint group $\on{Int}(\fg)$.  
The decomposition is direct with respect to the Killing form. 
Let $\fa \subseteq \fp$ be a maximal abelian subspace and 
\[
	\fg = \fg^0 \oplus \bigoplus_{\al \in \Si} \fg^\al 
\]
the root space decomposition with respect to $\fa$, where 
$$\fg^\al = \{ X \in \fg : (\on{ad} H) X = \al(H) X \text{ for all } H \in \fa\}$$ 
and $\Si$ is the set of roots $0 \ne \al \in \fa^*$ with $\fg^\al \ne 0$.
We set $m_\al := \dim \fg^\al$.

Choose a Weyl chamber $\fa^+ \subseteq \fa$, i.e., a connected component of the complement 
of the union of hyperplanes in $\fa$ defined by the roots $\al \in \Si$.
Let $\Si^+$ denote the collection of positive roots w.r.t.\ $\fa^+$. 
The adjoint action of $K$ on $\fp$ preserves the inner product induced by the Killing form. 
Every $K$-orbit intersects $\fa$ orthogonally in an orbit of the Weyl group. 
The Weyl group 
\[
	W = N_K(\fa)/Z_K(\fa),  
\]
where 
$$N_K(\fa) = \{ k \in K : \on{Ad}(k) \fa = \fa\}$$ 
and 
$$Z_K(\fa) = \{ k \in K : \on{Ad}(k) H = H \text{ for all } H \in \fa\},$$
is a finite group of linear automorphisms of $\fa$ which is generated by the 
reflections in the hyperplanes $\{H \in \fa: \al(H) = 0\}$, for $\al \in \Si^+$.

If $M$ denotes the centralizer of $\fa$ in $K$, then 
$$K/M \times \fa^+ \ni (kM,H) \mapsto \on{Ad}(k)H$$
is a diffeomorphism onto an open and dense subset of $\fp$. There is the following integral formula for 
$f \in C^\infty_c(\fp)$,
\begin{equation} \label{change}
	\int_\fp f(x) \, dx = \int_{\fa^+} \int_{K/M} f(\on{Ad}(k)H) \prod_{\al \in \Si^+} \al(H)^{m_\al} \, dk \, dH,
\end{equation}
where $dx$ and $dH$ are the Lebesgue measures on $\fp$ and $\fa$, respectively, and $dk$ is an invariant measure on $K/M$, 
all of them with suitable normalizations; see \cite[p.380]{Helgason:1962aa}.

We denote by $C^\infty(\fp)^K$ the space of $C^\infty$-functions on $\fp$ which are invariant under 
the adjoint	action of $K$ on $\fp$. Similarly $C^\infty(\fa)^W$ is the space of $W$-invariant $C^\infty$-functions on $\fa$.
By $\De_{\fp}$ we mean the flat Euclidean Laplace operator on $\fp$.
For $f \in C^\infty(\fp)^K$ we have  
\[
	(\De_{\fp} f)|_{\fa^+} = \on{rad}(\De_{\fp}) (f|_{\fa^+}),
\]
where $\on{rad}(\De_{\fp})$ is a differential operator on $\fa^+$ called the \emph{radial part} of $\De_{\fp}$. 
Then (see \cite[Proposition 1.1]{Dadok:1982aa}) 
\begin{equation} \label{rad}
	\on{rad}(\De_{\fp})  = \De_{\fa} + \sum_{\al \in \Si^+} m_\al \frac{\on{grad} \al}{\al},	
\end{equation}
where $\De_{\fa}$ is the Laplace operator on $\fa$ and $(\on{grad} \al)(f) := \langle \on{grad} \al , \on{grad} f\rangle$. 

\subsection{Chevalley's theorem in local ultradifferentiable classes}

\begin{theorem} \label{Chevalley}
	Let $\fM$ be a [regular] weight matrix. 
	Then the restriction mapping  $\cE^{[\fM]}(\fp)^K \to \cE^{[\fM]}(\fa)^W$ is an isomorphism. 
\end{theorem}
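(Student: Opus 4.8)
The plan is to realize the restriction map $\cE^{[\fM]}(\fp)^K \to \cE^{[\fM]}(\fa)^W$ as a continuous bijection with continuous inverse, adapting Dadok's $C^\infty$-argument but using the lacunary elliptic regularity of \Cref{lem:A2} in place of ordinary elliptic regularity. Well-definedness and continuity are routine: restricting to the linear subspace $\fa$ can only decrease the seminorms $\|\cdot\|^M_{K,\rh}$, and $W$-invariance of $f|_\fa$ follows from $W = N_K(\fa)/Z_K(\fa)$ together with the $K$-invariance of $f$. Injectivity is immediate, since every $K$-orbit meets $\fa$ and hence a $K$-invariant function is determined by $f|_\fa$. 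Everything therefore reduces to a quantitative surjectivity statement; the continuity of the inverse will come for free from the norm estimates produced along the way.

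For surjectivity I would start from $g \in \cE^{[\fM]}(\fa)^W \subseteq C^\infty(\fa)^W$ and first invoke the classical $C^\infty$ Chevalley theorem of Dadok to obtain a $K$-invariant $f \in C^\infty(\fp)^K$ with $f|_\fa = g$; the whole problem is then to upgrade $f$ to $\cE^{[\fM]}(\fp)$. Since $\De_\fp$ is elliptic of order $2$ with constant (hence analytic) coefficients, \Cref{lem:A2} reduces this to the bounds $\|\De_\fp^{k_j} f\|_{L^2(K')} \le C \rh^{2 k_j} M_{2k_j}$ on compacta, for a sequence $(k_j)$ satisfying \eqref{eq:reccurence} — for instance $k_j = j$ — and with the quantifiers appropriate to the Roumieu or Beurling case. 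Because $\De_\fp$ commutes with the $K$-action, each $\De_\fp^{k_j} f$ is again $K$-invariant, so by the radial-part formula \eqref{rad} its restriction to the open chamber satisfies $(\De_\fp^{k_j} f)|_{\fa^+} = \on{rad}(\De_\fp)^{k_j} g$, and the integral formula \eqref{change} turns the $L^2(\fp)$-norm into a weighted $L^2$-norm over $\fa^+$,
\[
	\|\De_\fp^{k_j} f\|_{L^2(K')}^2 \le C \int_{K' \cap \fa^+} \big| \on{rad}(\De_\fp)^{k_j} g\big|^2 \prod_{\al \in \Si^+} \al^{m_\al} \, dH.
\]
It thus remains to bound $\on{rad}(\De_\fp)^{k} g$ intrinsically on $\fa$ in terms of the ultradifferentiability of $g$.

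The hard part — essentially the only nontrivial estimate — is to prove, for every $W$-invariant $g$, a bound of the form
\[
	\Big\| \on{rad}(\De_\fp)^{k} g \Big\|_{L^2(K' \cap \fa^+,\, \pi\, dH)} \le C_0^{\,k} \max_{|\be| \le 2k} \|\p^\be g\|_{L^\infty(K'')}, \qquad \pi := \prod_{\al \in \Si^+} \al^{m_\al},
\]
where $C_0$ depends only on $(\Si^+, m_\al)$ and not on $k$, and — decisively — the order of differentiation is capped at $2k$, with no loss. Granting this, $g \in \cE^{[\fM]}$ gives $\max_{|\be| \le 2k} \|\p^\be g\|_{L^\infty} \le C \rh^{2k} M_{2k}$ (using that $M$ is increasing), and combining with the previous display yields precisely the bound demanded by \Cref{lem:A2}, finishing the proof. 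The obstruction is that $\on{rad}(\De_\fp) = \De_\fa + \sum_{\al} m_\al\, \on{grad}\al / \al$ has coefficients singular along the walls $\{\al = 0\}$, and iterating it naively breeds ever higher negative powers of the $\al$. Here $W$-invariance is what saves the day: since $g$ is even across each wall, $\on{grad}\al(g)$ vanishes on $\{\al = 0\}$, so $\on{grad}\al(g)/\al$ is smooth and, by Hadamard's lemma, dominated by second-order derivatives of $g$ — the division by $\al$ costing exactly one derivative, so that $\on{rad}(\De_\fp)$ acts like a bona fide second-order operator on invariants. I would run the induction on $k$ using the self-adjoint, weighted-Laplacian form $\on{rad}(\De_\fp) = \pi^{-1} \on{div}(\pi\, \on{grad}\, \cdot\,)$ encoded in \eqref{rad}, which simultaneously explains the weight $\pi$ in the integral formula and makes the requisite integrations by parts clean; the remaining, and most delicate, task is to check that the singular terms generated at each step of the iteration reassemble into divided differences controlled by at most $2k$ derivatives, with only geometrically growing constants.
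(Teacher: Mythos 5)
Your architecture coincides with the paper's: reduce to the lacunary elliptic regularity of \Cref{lem:A2} for $P=\De_\fp$ and $k_j = j$, transfer the problem to $\fa$ via $(\De_\fp^k \widetilde f)|_{\fa} = \on{rad}(\De_\fp)^k g$, and control the iterates of the radial part using the fact that $W$-invariance makes $(\on{grad}\al)(g)/\al$ nonsingular. (Two cosmetic differences: the paper builds the extension as a continuous orbit-constant function and gets smoothness from the distributional identity \eqref{key} plus weak elliptic regularity, rather than quoting Dadok's $C^\infty$ theorem as a black box; and it works with $L^\infty$-norms, using that a $K$-invariant function on a $K$-saturated set attains its sup on the slice $V\cap\fa$, instead of your weighted $L^2$-norms via \eqref{change}. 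Both feed equally well into \Cref{lem:A2}.)

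However, the one estimate that carries the whole theorem --- your display $\| \on{rad}(\De_\fp)^{k} g \| \le C_0^{k} \max_{|\be| \le 2k} \|\p^\be g\|$ with order capped at $2k$ and geometric constants --- is exactly what you do not prove: you apply the Hadamard argument only to a \emph{single} application of $\on{rad}(\De_\fp)$, and then propose to iterate by induction through the self-adjoint form $\pi^{-1}\on{div}(\pi\,\on{grad}\,\cdot)$ with integrations by parts, conceding that the control of ``singular terms generated at each step'' is left open. That mechanism is the wrong tool: integration by parts redistributes derivatives and creates boundary terms on compacta, but it cannot by itself show that the $k$-th iterate of an operator with wall singularities is a bona fide order-$2k$ expression in $g$. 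The point the paper exploits is that in the correct iteration \emph{no singular terms ever appear}: writing $v_\al := \on{grad}\al/|\on{grad}\al|$, one has for every $h \in C^\infty(\fa)^W$ the pointwise representation
\[
	\on{rad}(\De_\fp)h(x) = \De_\fa h(x) + \sum_{\al\in\Si^+} m_\al \int_0^1 \big(d_{v_\al}^2 h\big)\big(L_{\al,t}x\big)\,dt,
	\qquad L_{\al,t}x := x - (1-t)\tfrac{\al(x)}{|\on{grad}\al|}\,v_\al ,
\]
which is your Hadamard step; and, crucially, $\on{rad}(\De_\fp)h$ is again smooth and $W$-invariant, so the \emph{same} representation applies to it. Since each $L_{\al,t}$ is linear of norm $\le 1$ and maps balls centered at the origin into themselves, expanding recursively exhibits $\on{rad}(\De_\fp)^m g$ as a sum of at most $(\dim\fa + \sum_\al m_\al)^m$ iterated integrals of directional derivatives of $g$ of order exactly $2m$ along vectors of norm $\le 1$, evaluated in the same ball. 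This yields \eqref{estimate} directly in $L^\infty$, with geometric constants and no loss of order --- no divided differences to reassemble, no boundary terms. Replacing your integration-by-parts plan with this pointwise recursion closes the gap; the rest of your argument then runs as in the paper.
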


\begin{proof}
	Every $f \in C^\infty(\fa)^W$ 
	can be extended to a continuous function $\widetilde f$ on $\fp$, by making it constant on the $K$-orbits.
	The continuity of $\widetilde f$ follows from \cite[Proposition 2.4]{Helgason:1980aa}; in fact, for $X,Y \in \fp$ 
	one has 
	\[
		\on{dist}(\on{Ad}(K) X,\on{Ad}(K) Y) = \on{dist}(\on{Ad}(K) X \cap \fa^+,\on{Ad}(K) Y \cap \fa^+). 
	\] 
	Using \eqref{change} and \eqref{rad}, it is not hard to see that
	\begin{equation} \label{key}
		\De_{\fp} \widetilde f = (\on{rad}(\De_{\fp}) f){\,\widetilde{~~~}}
	\end{equation}
	in the sense of distributions; for details see \cite[pp.124-125]{Dadok:1982aa}.
	 	
	Since $f$ is invariant with respect to the reflection through the hyperplane $\{\al = 0\}$, 
	the function $(\on{grad} \al)(f)$ vanishes on $\{\al = 0\}$ whence $(\on{grad} \al)(f)/\al$ is smooth. 
	Then, by \eqref{rad}, we may conclude that  
	$\on{rad}(\De_{\fp}) f \in C^\infty(\fa)^W$, where now $\on{rad}(\De_{\fp})$ is the obvious extension to $\fa$ 
	as a $W$-invariant differential operator (with singularities along the hyperplanes $\{\al = 0\}$, for $\al \in \Si^+$).

	Iterating \eqref{key} yields 
	\begin{equation} \label{key2}
		   \De_{\fp}^m \widetilde f = (\on{rad}(\De_{\fp})^m f){\,\widetilde{~~~}}, \quad m \ge 1.  	
	\end{equation}    
	This implies that $\widetilde f \in C^\infty(\fp)^K$ by means of elliptic regularity; cf.\ \cite[p.124]{Dadok:1982aa}.

	Now suppose that $f \in \cE^{[\fM]}(\fa)^W$. By the above, the $K$-invariant extension $\widetilde f$ is smooth.
	Let $U$ be a relatively compact open subset of $\fp$ and let $V$ be its saturation with respect to the $K$-action, 
	i.e., the union of all $K$-orbits that meet $U$. 
	By \eqref{key2},  
	\begin{equation} \label{relation}
		 \|\De_{\fp}^m \widetilde f\|_{L^\infty(V)}	= \| (\on{rad}(\De_{\fp})^m f){\,\widetilde{~~~}}\, \|_{L^\infty(V)} = 
		 \| \on{rad}(\De_{\fp})^m f \|_{L^\infty(V \cap \fa)}.
	\end{equation}  
	We claim that there exist $M \in \fM$ and constants $C,\rh >0$ 
	(resp.\ for all $M \in \fM$ and all $\rh >0$ there is $C>0$) such that  
	\begin{equation} \label{estimate}
		\| \on{rad}(\De_{\fp})^m f \|_{L^\infty(V \cap \fa)} \le C \rh^{2m} M_{2m}, \quad \text{ for all } m \ge 1.
	\end{equation}
	Then, by \eqref{relation}, we may conclude
	\begin{equation*}
		\|\De_{\fp}^m \widetilde f\|_{L^\infty(V)}  \le C \rh^{2m} M_{2m}, \quad \text{ for all } m \ge 1.	
	\end{equation*}
	That implies that $\widetilde f \in \cE^{[\fM]}_{\De_{\fp},(j)_j}(\fp)$. 
	By \Cref{lem:A2}, we find that  $\widetilde f \in \cE^{[\fM]}(\fp)^K$.

	It remains to show the claim \eqref{estimate}. 
	We use that, for any $f \in C^\infty(\fa)^W$, the 
	function $$F:= \frac{(\on{grad} \al)(f)}{|\on{grad} \al|}$$ vanishes on the hyperplane $\{\al =0\}$. 
	Although $\on{rad}(\De_{\fp})$ is a differential operator with singularities 
	along the hyperplanes $\{\al = 0\}$, for $\al \in \Si^+$, its action on $W$-invariant functions $f$ is well-behaved. 
	Indeed, 
	if we set 
	\[
			v_\al = \frac{\on{grad} \al}{|\on{grad} \al|}
	\]
	and denote by 
	\[
		y_\al(x) := x -  \frac{\al(x)}{|\on{grad} \al|} v_\al,
	\]
	the orthogonal projection on the hyperplane $\{\al = 0\}$, 
	then since $F$ vanishes on $\{\al = 0\}$ we have for all $x \in \fa$,
	\begin{align*}
		F(x) &= \int_0^1 \frac{d}{dt} F\big(t \tfrac{\al(x)}{|\on{grad} \al|}  v_\al + y_\al(x)\big) \,dt
			\\
			&= \int_0^1 dF\big(t \tfrac{\al(x)}{|\on{grad} \al|}  v_\al + y_\al(x)\big) (\tfrac{\al(x)}{|\on{grad} \al|}  v_\al) \,dt
			\\
			&=\frac{\al(x)}{|\on{grad} \al|} \int_0^1 d_{v_\al} F\big(t \tfrac{\al(x)}{|\on{grad} \al|}  v_\al + y_\al(x)\big) \,dt. 		
	\end{align*}
	Since $F =\frac{(\on{grad} \al)(f)}{|\on{grad} \al|} = d_{v_\al} f$, we obtain 
	\begin{align*}
		\Big(\frac{\grad \al}{\al}\Big) (f)(x) 
		&=  \int_0^1 d_{v_\al} \big(d_{v_\al} f\big)\big(t \tfrac{\al(x)}{|\on{grad} \al|}  v_\al + y_\al(x)\big) \,dt
		\\
		&=  \int_0^1 d_{ v_\al}^2 f \big(t \tfrac{\al(x)}{|\on{grad} \al|}  v_\al + y_\al(x)\big) \,dt.
	\end{align*}
	By \eqref{rad}, we see that for $f \in C^\infty(\fa)^W$ we have
	\begin{equation*} 
	\on{rad}(\De_{\fp})f(x)  = (\De_{\fa} f)(x) + \sum_{\al \in \Si^+} m_\al  \int_0^1 d_{ v_\al}^2 f \big(t \tfrac{\al(x)}{|\on{grad} \al|}  v_\al + y_\al(x)\big) \,dt.	
	\end{equation*}
	Since $\on{rad}(\De_{\fp})f \in C^\infty(\fa)^W$ we can replace $f$ in the above formula by 
	$\on{rad}(\De_{\fp})f$ and iterate this procedure in order to express 
	$\on{rad}(\De_{\fp})^m(f)$ for $m \ge 1$ in terms of combinations of powers of differential operators $\De_\fa$ and 
	$d_{v_\al}$ for $\al \in \Si^+$ applied to $f$. 
	Using the linearity of the operators and the linearity of $\al$ and $y_\al$ and 
	computing $L^\infty$-norms on balls centered at the origin in $\fa$,  
	it is then straightforward to conclude \eqref{estimate}.	
\end{proof}

\begin{corollary} \label{cor1}
	Let $M$ be a weight sequence with moderate growth. Then 
	the restriction mapping  $\cE^{[M]}(\fp)^K \to \cE^{[M]}(\fa)^W$ is an isomorphism, 
	if we assume $(k!/M_k)^{1/k}$ is bounded in the Roumieu case and $(k!/M_k)^{1/k} \to 0$ as $k \to \infty$ in the Beurling case.
\end{corollary}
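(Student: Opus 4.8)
The plan is to recognize \Cref{cor1} as a direct specialization of \Cref{Chevalley} to the case of a single weight sequence, so that no new analysis is required. Concretely, I would take the singleton weight matrix $\fM = \{M\}$. As noted when weight matrices were introduced, every Denjoy--Carleman class arises this way: $\cE^{[\fM]} = \cE^{[M]}$ algebraically and topologically, and in particular the invariant subspaces coincide, $\cE^{[\fM]}(\fp)^K = \cE^{[M]}(\fp)^K$ and $\cE^{[\fM]}(\fa)^W = \cE^{[M]}(\fa)^W$. Hence it suffices to verify that, under the hypotheses of the corollary, the singleton $\{M\}$ is a [regular] weight matrix; once this is done, \Cref{Chevalley} applies verbatim and yields the claimed isomorphism.

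The verification of regularity is immediate and is really the only step. In the Roumieu case, condition ($R_1$) demands that $(k!/M_k)^{1/k}$ be bounded, which is exactly the assumption we have imposed. Condition ($R_2$) requires, for the unique member $M$ of the matrix, that there exist $N \in \{M\}$---necessarily $N = M$---and a constant $C>0$ with $M_{k+\ell} \le C^{k+\ell} N_k N_\ell$; for a singleton this is precisely the moderate growth condition \ref{sec:DC}(5), which we have assumed. The Beurling case is symmetric: ($B_1$) is the hypothesis $(k!/M_k)^{1/k} \to 0$, and ($B_2$) again reduces, upon taking $L = M$, to moderate growth. This reduction of ($B_2$) and ($R_2$) to \ref{sec:DC}(5) for a single sequence was already recorded in \Cref{sec:ultra}.

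I expect no genuine obstacle here, since all of the work has been carried out in the proof of \Cref{Chevalley}: the corollary is simply the translation of that theorem's matrix hypotheses into the language of one weight sequence. The only thing worth stating carefully is that the two separate assumptions in the corollary---moderate growth together with the appropriate comparison to $k!$---are exactly what is needed to render $\{M\}$ regular in each type. Accordingly, I would present the proof as the observation that these hypotheses make $\{M\}$ a [regular] weight matrix, followed by an invocation of \Cref{Chevalley}.
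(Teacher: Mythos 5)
Your proposal is correct and matches the paper's (implicit) reasoning exactly: the corollary is stated without proof because, as the paper notes in \Cref{sec:ultra}, the singleton $\fM=\{M\}$ is a weight matrix for which ($B_2$)/($R_2$) reduce to moderate growth, so the hypotheses make $\{M\}$ [regular] and \Cref{Chevalley} applies directly. Nothing further is needed.
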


\begin{corollary} \label{cor2}
	Let $\om$ be a weight function. Then 
	the restriction mapping  $\cE^{[\om]}(\fp)^K \to \cE^{[\om]}(\fa)^W$ is an isomorphism, 
	if we assume $\om(t) = O(t)$ as $t \to \infty$ in the Roumieu case and $\om(t) = o(t)$  as $t \to \infty$ in the Beurling case. 
\end{corollary}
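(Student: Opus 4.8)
The plan is to derive this from the general local Chevalley theorem \Cref{Chevalley} by realizing the Braun--Meise--Taylor class as an $\cE^{[\fM]}$-class for a suitable weight matrix and then checking that this matrix is [regular] precisely under the stated growth hypotheses on $\om$. The key input is the identification recorded just before \Cref{Chevalley}: by \cite{RainerSchindl12}, the prescription $W^x_k = \exp(\tfrac1x \vh^*(xk))$ defines a weight matrix $\fW = \{W^x : x>0\}$ for which $\cE^{[\om]} = \cE^{[\fW]}$ holds algebraically and topologically (on the Euclidean spaces underlying $\fp$ and $\fa$, after a linear identification, which is harmless since these classes are invariant under analytic coordinate changes).

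First I would observe that, since the underlying locally convex function spaces $\cE^{[\om]}$ and $\cE^{[\fW]}$ literally coincide, their $K$-invariant (resp.\ $W$-invariant) subspaces---being cut out by the same linear invariance conditions---also coincide, and the restriction map $f \mapsto f|_\fa$ is the same map in both descriptions. Hence it suffices to prove that $\fW$ is a [regular] weight matrix; \Cref{Chevalley} then yields that $\cE^{[\fW]}(\fp)^K \to \cE^{[\fW]}(\fa)^W$ is an isomorphism, which is exactly the asserted statement for $\cE^{[\om]}$.

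It remains to verify regularity of $\fW$. By \cite[(5.6)]{RainerSchindl12}, the matrix $\fW$ always satisfies both ($B_2$) and ($R_2$). By \cite[Corollary 5.17]{RainerSchindl12}, the Beurling hypothesis $\om(t)=o(t)$ is equivalent to ($B_1$) for $\fW$, and the Roumieu hypothesis $\om(t)=O(t)$ is equivalent to ($R_1$); so in the Beurling case $\fW$ satisfies ($B$) and in the Roumieu case it satisfies ($R$). In either case $\fW$ is [regular], completing the reduction. I do not expect any real obstacle here beyond correctly matching hypotheses to conditions: the analytic substance lives entirely in \Cref{Chevalley}, the conditions ($B_2$)/($R_2$) come for free from the structure of $\fW$, and the growth assumptions on $\om$ are exactly calibrated to supply the remaining conditions ($B_1$)/($R_1$) via the cited characterization.
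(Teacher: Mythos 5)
Your proposal is correct and is exactly the argument the paper intends: \Cref{cor2} is stated without explicit proof precisely because it follows from \Cref{Chevalley} combined with the identification $\cE^{[\om]}=\cE^{[\fW]}$ and the facts from \cite{RainerSchindl12} (conditions ($B_2$)/($R_2$) hold automatically for $\fW$, and ($B_1$)/($R_1$) hold exactly under the stated growth hypotheses on $\om$), all of which the paper records in \Cref{sec:ultra}. Your verification that the invariant subspaces and the restriction map agree under the identification is a correct, if routine, completion of that reduction.
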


\begin{corollary} \label{cor3}
	The restriction mapping  $C^\om(\fp)^K \to C^\om(\fa)^W$ is an isomorphism.
\end{corollary}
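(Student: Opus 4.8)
The plan is to realize the real analytic class as one of the ultradifferentiable classes already covered by \Cref{Chevalley} and then simply invoke that theorem (or its specialization \Cref{cor1}). Recall from \Cref{sec:DC} that the real analytic functions form the Roumieu Denjoy--Carleman class associated with the weight sequence $M = (k!)_k$, that is, $C^\om = \cE^{\{(k!)_k\}}$. Thus I would take $\fM$ to be the singleton weight matrix $\{(k!)_k\}$ and check that it is [regular] in the Roumieu sense so that \Cref{Chevalley} applies.

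First I would verify that $M_k = k!$ is a weight sequence: logarithmic convexity amounts to $M_{k+1}/M_k = k+1 \ge k = M_k/M_{k-1}$, the normalization $M_0 = 1 \le 1 = M_1$ is immediate, and $M_k^{1/k} = (k!)^{1/k} \to \infty$ by Stirling's formula. Next, the Roumieu condition $(R_1)$ holds trivially because $(k!/M_k)^{1/k} = 1$ is bounded, and $(R_2)$ reduces, for a singleton matrix, to the moderate growth estimate $(k+\ell)! \le 2^{k+\ell} k!\,\ell!$, which follows from $\binom{k+\ell}{k} \le 2^{k+\ell}$. Hence $\{(k!)_k\}$ is a regular Roumieu weight matrix.

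With this identification, \Cref{Chevalley} (equivalently \Cref{cor1} applied to the moderate-growth sequence $M_k = k!$, whose Roumieu hypothesis ``$(k!/M_k)^{1/k}$ bounded'' is satisfied) immediately yields that the restriction mapping $\cE^{\{(k!)_k\}}(\fp)^K \to \cE^{\{(k!)_k\}}(\fa)^W$ is an isomorphism, which is precisely the claimed isomorphism $C^\om(\fp)^K \to C^\om(\fa)^W$.

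There is essentially no obstacle here beyond bookkeeping of the defining conditions: all the analytic content---the continuous $K$-invariant extension, the radial part computation \eqref{rad}, the reduction to lacunary estimates along directional derivatives via \Cref{lem:A1}, and the weak elliptic regularity of \Cref{lem:A2}---is already carried out in the proof of \Cref{Chevalley}. If one prefers, one could alternatively exhibit $C^\om$ as the Braun--Meise--Taylor class $\cE^{\{\om\}}$ for the weight $\om(t) = t$, which satisfies the Roumieu growth restriction $\om(t) = O(t)$ trivially, and invoke \Cref{cor2}; both routes lead to the same conclusion.
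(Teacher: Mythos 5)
Your proposal is correct and follows exactly the paper's intended route: the paper derives \Cref{cor3} by observing that $C^\om = \cG^1 = \cE^{\{(k!)_k\}}$ and that the weight sequence $(k!)_k$ satisfies all the conditions making the singleton weight matrix [regular] in the Roumieu sense, so \Cref{Chevalley} (equivalently \Cref{cor1}) applies directly. Your verification of logarithmic convexity, $(R_1)$, and the moderate growth bound $(k+\ell)! \le 2^{k+\ell}\,k!\,\ell!$ is precisely the bookkeeping the paper leaves implicit.
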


\subsection{Chevalley's theorem in Gelfand--Shilov classes}

As a consequence of the $C^\infty$ Chevalley theorem Dadok \cite[Corollary 1.5]{Dadok:1982aa} showed that every $W$-invariant  
Schwartz function $f \in \cS(\fa)^W$ extends to a Schwartz function $\widetilde f \in \cS(\fp)^K$; see 
also Helgason \cite[Proposition 2.3]{Helgason:1980aa} for a different proof.

\begin{theorem} \label{GelfandShilov}
	Let $\fM$ be a [regular] weight matrix. 
	Then the restriction mapping  $\cS^{[\fM]}(\fp)^K \to \cS^{[\fM]}(\fa)^W$ is an isomorphism.
\end{theorem}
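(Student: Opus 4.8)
The content of the theorem is surjectivity: given $f \in \cS^{[\fM]}(\fa)^W$, its $K$-invariant extension $\widetilde f$ must be shown to lie in $\cS^{[\fM]}(\fp)^K$. (Injectivity and the inclusion of the image into $\cS^{[\fM]}(\fa)^W$ are immediate: every $K$-orbit meets $\fa$, so $\widetilde f$ is determined by $f$, and restriction only improves the defining bounds.) Since $\cS^{[\fM]} \subseteq \cE^{[\fM]}$, \Cref{Chevalley} already yields $\widetilde f \in \cE^{[\fM]}(\fp)^K$, so in particular $\widetilde f$ is smooth; and by Dadok's Schwartz version of the Chevalley theorem recalled above, $\widetilde f \in \cS(\fp)^K$. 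What must be added is the passage from the \emph{local} derivative bounds of \Cref{Chevalley} to the \emph{global} (uniform in $x$) bounds demanded by the Gelfand--Shilov class.

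The plan is to read off membership from \Cref{lem:A3}. By condition (2) there it suffices to establish, for $\widetilde f$, the two global families $\sup_x |x^\al \widetilde f(x)| \le C\rh^{|\al|} M_{|\al|}$ and $\sup_x |D^\be \widetilde f(x)| \le C\si^{|\be|} M_{|\be|}$. The first is immediate: as $|x|$ and $\widetilde f$ are $K$-invariant and each orbit meets $\fa$, one has $\sup_{x \in \fp} |x|^k |\widetilde f(x)| = \sup_{H \in \fa} |H|^k |f(H)|$, which (taking $\al=0$) is controlled by the Gelfand--Shilov seminorms of $f$, and $|x^\al| \le |x|^{|\al|}$ does the rest. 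The derivative bounds are the crux, and here I would invoke the Fourier transform. Since $K$ acts orthogonally on $\fp$, $\widehat{\widetilde f}$ is again a $K$-invariant Schwartz function; by the equivalence (2)$\Leftrightarrow$(3) of \Cref{lem:A3}, and since the multiplication bounds are already in hand, the required derivative bounds for $\widetilde f$ are equivalent to the decay bounds $\sup_\xi |\xi^\be \widehat{\widetilde f}(\xi)| \le C\si^{|\be|} M_{|\be|}$. By $K$-invariance of $\widehat{\widetilde f}$ these reduce, exactly as before, to $\sup_{H\in\fa} |H|^k |g(H)| \le C\si^k M_k$, where $g := \widehat{\widetilde f}|_\fa$; that is, to rapid decay of $g$.

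To prove this decay I would route it through the Laplacian and the integral formula. From $|\xi|^{2\ell} \widehat{\widetilde f}(\xi) = \widehat{(-\De_\fp)^\ell \widetilde f}(\xi)$ and $\|\widehat h\|_{L^\infty} \le \|h\|_{L^1}$ one obtains $\sup_H |H|^{2\ell} |g(H)| \le \|(-\De_\fp)^\ell \widetilde f\|_{L^1(\fp)}$ (to reach all orders, not only even ones, use $(1-\De_\fp)^\ell$, the bounded shift in derivation order being absorbed by ($R_2$)/($B_2$)). By \eqref{key2} we have $(-\De_\fp)^\ell \widetilde f = (\on{rad}(\De_\fp)^\ell f){\,\widetilde{~~~}}$ up to sign, so \eqref{change} rewrites the $L^1$-norm as $\int_{\fa^+} |\on{rad}(\De_\fp)^\ell f(H)| \prod_{\al \in \Si^+} \al(H)^{m_\al}\, dH$. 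Inserting a factor $(1+|H|)^{-(\dim\fa+1)}$ and absorbing the density into the weight, the whole matter reduces to one global weighted estimate
\[
	\sup_{H \in \fa} (1+|H|)^{p}\, |\on{rad}(\De_\fp)^\ell f(H)| \le C_p\, \si^{2\ell} M_{2\ell},
\]
with $p = \dim\fa + 1 + \sum_{\al \in \Si^+} m_\al$ fixed.

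This weighted estimate is the main obstacle; it is a decay-carrying global version of \eqref{estimate}. I would prove it by the same device used there, expressing $\on{rad}(\De_\fp)^\ell f$ via the integral representation $\big(\tfrac{\grad\al}{\al}\big)(h)(x) = \int_0^1 d_{v_\al}^2 h\big(t\tfrac{\al(x)}{|\grad\al|} v_\al + y_\al(x)\big)\,dt$, so that $\on{rad}(\De_\fp)^\ell f$ becomes a bounded combination of order-$2\ell$ derivatives of $f$ evaluated along the segments from $x$ to its projection $y_\al(x)$ onto the wall $\{\al = 0\}$ (here $v_\al$ is the unit normal to the wall). The one new point is to carry the weight $(1+|H|)^p$ past the singular factors $1/\al$: splitting into $\{|\al(H)| \ge 1\}$, where $1/\al$ is harmless, and $\{|\al(H)| < 1\}$, where the argument $\zeta$ of $f$ in the integral satisfies $|\zeta| \ge |H| - O(1)$ so that $(1+|H|)^p \lesssim (1+|\zeta|)^p$, one transfers the weight onto the argument of $f$. (It is precisely the vanishing of $d_{v_\al} f$ on $\{\al = 0\}$ forced by $W$-invariance that legitimizes both the integral representation and this transfer.) The resulting weighted derivatives are bounded by the seminorms $\||x|^p D^\be f\|_{L^\infty} \le C\rh^{p + 2\ell} M_p M_{2\ell}$ with $|\be| = 2\ell$ and $p$ fixed, yielding the claimed $C_p\si^{2\ell} M_{2\ell}$; the bookkeeping of the number of terms and of the constants, uniform in $\ell$, follows the end of the proof of \Cref{Chevalley}, with ($R_2$)/($B_2$) used to absorb the fixed powers $M_p$.
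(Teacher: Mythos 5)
Your proposal is correct and follows essentially the same route as the paper: membership is read off from the Fourier characterization in \Cref{lem:A3}, the decay of $(\widetilde f)^\wedge$ is reduced via \eqref{key2} to a polynomially weighted global $L^\infty$ bound on $\on{rad}(\De_{\fp})^\ell f$ over $\fa$, and that bound is obtained from the integral-representation device in the proof of \Cref{Chevalley}. Your near-wall/far-wall splitting to transfer the weight $(1+|H|)^p$ onto the argument of $f$ (and your detour through \eqref{change} instead of the $K$-invariance of weighted sup-norms) only spells out details that the paper compresses into ``the justification for \eqref{estimate}''; it is the same argument.
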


\begin{proof}
Let $f  \in \cS^{[\fM]}(\fa)^W$. 
We want to show that the $K$-invariant extension $\widetilde f$ of $f$ to $\fp$ is of class $\cS^{[\fM]}$.
We already know that $\widetilde f$ is smooth. 
Choose linear coordinates in $\fp$ such that the $K$-invariant inner product induced by the Killing form is given by 
$\langle x,y \rangle = x_1 y_1 + \cdots + x_n y_n$. 
Then $|x|^2 = \langle x, x \rangle$ is a $K$-invariant polynomial. By \Cref{lem:A3}, it suffices to check that 
there exist $M \in \fM$ and constants $C,\rh,\si>0$ 
(resp.\ for all $M \in \fM$ and all $\rh,\si>0$ there is $C>0$)
such that 
\begin{equation} \label{GSestimate}
	\sup_x |x|^{2k} |\widetilde f(x)| \le C \rh^{2k} M_{2k}  \quad \text{ and } 
	\quad \sup_\xi |\xi|^{2k} |(\widetilde f)^\wedge(\xi)| \le C \si^{2k} M_{2k}
\end{equation}
for all $k$.
Here $(\widetilde f)^\wedge$ denotes the Fourier transform of $\widetilde f$.  
(We use that for any $\al \in \N^n$ we have 
$|x^\al| \le |x|^{|\al|} \le |x|^{2k} \max\{1,|x|^2\}$, where $k = \lfloor |\al|/2 \rfloor$, and that $\fM$ is 
[regular].)

The first estimate in \eqref{GSestimate} simply follows from the assumption $f  \in \cS^{[\fM]}(\fa)^W$ and 
the fact that 
$$\big\| | \cdot |^{2k}  \widetilde f(\cdot) \big\|_{L^\infty(\fp)} 
= \big\| | \cdot |^{2k} f(\cdot)\big\|_{L^\infty(\fa)}.$$ 
For the second estimate in \eqref{GSestimate} we observe that for all $\xi \in \fp$ (where $n = \dim \fp$)
\begin{align*}
 |\xi|^{2k} | (\widetilde f)^\wedge(\xi)| \le |(\De_\fp^k \widetilde f)^\wedge(\xi)| 
&\le \int_\fp (1+ |x|^{2n})|\De_\fp^k \widetilde f(x)| \, \frac{dx}{(1+ |x|^{2n})} 	
\\
&\le \big\|(1+ |\cdot|^{2n}) \De_\fp^k  \widetilde f(\cdot)\big\|_{L^\infty(\fp)} \int_\fp  \, \frac{dx}{(1+ |x|^{2n})}
\\
&\le  C(n)\, \big\|(1+ |\cdot |^{2n}) \on{rad}(\De_\fp)^k f(\cdot)\big\|_{L^\infty(\fa)},
\end{align*}
where in the last inequality we used \eqref{key2}.
The assumption $f  \in \cS^{[\fM]}(\fa)^W$ together with the fact that $\fM$ is [regular] and the justification for \eqref{estimate} 
yields the required estimate.
\end{proof}

\section{Isotropic functions}

	Let $\fg =\mathfrak{sl}(n,\R)$ be the Lie algebra of $n \times n$ real matrices with trace zero 
	and consider the Cartan decomposition $\fg = \fk \oplus \fp$, where $\fk= \mathfrak{so}(n,\R)$ is the Lie algebra 
	of skew-symmetric matrices and $\fp = \on{Sym}(n)_0$ are the symmetric matrices with trace zero. 
	Then $K = \on{SO}(n)$ acts by conjugation on $\fp$  
	and the maximal subalgebra $\fa = \on{Diag}(n)_0$ consists of the diagonal matrices with trace zero. 
	The Weyl group is isomorphic to the symmetric group $\on {S}_n$ and acts on $\fa$ be permuting the diagonal entries. 

	Let $F : \on{Diag}(n) \cong \R^n \to \R$ be a symmetric function. 
	We extend $F$ to an isotropic function $f : \on{Sym}(n) \to \R$ by setting $f(A) = F(a_1,\ldots,a_n)$, where 
	$a_1 \ge a_2 \ge \cdots \ge a_n$ are the eigenvalues of $A$. 
	Setting $B= A - \tfrac{1}{n}(\Tr A) \mathbb I$ and $b_j := a_j - \tfrac{1}{n}\sum_{i=1}^n a_i$, for $j=1,\ldots,n$, we have
	\begin{align*}
		g(B) &:= f(B + \tfrac{1}{n}(\Tr A) \mathbb I) 
		\\
		&= f(A) 
		\\
		&= F(a_1,\ldots,a_n) 
		\\
		&=  F\Big(b_1 + \tfrac{1}{n}\sum_{i=1}^n a_i,\ldots,b_n +\tfrac{1}{n}\sum_{i=1}^n a_i\Big) 
		\\
		&=: G(b_1,\ldots,b_n),
	\end{align*}
	where $g : \on{Sym}(n)_0 \to \R$ is isotropic and $G : \on{Diag}(n)_0 \to \R$ is symmetric. 
	Moreover, if $F$ is $C^\infty$, of class $\cE^{[\fM]}$, or of class $\cS^{[\fM]}$ 
	(for a [regular] weight matrix), then $G$ is of the same class, 
	since $F$ is the composite of $G$ with the orthogonal projection onto $\on{Diag}(n)_0$.   
	Then \Cref{Chevalley} and \Cref{GelfandShilov} imply that 
	$g$, and thus $f$, is of the corresponding class. So we obtain

	\begin{theorem} \label{isotropic}
		Let $\fM$ be a [regular] weight matrix.
		An isotropic function $f : \on{Sym}(n) \to \R$ is of class $\cE^{[\fM]}$ 
		(resp.\ $\cS^{[\fM]}$) if and only if its symmetric companion $F : \R^n \to \R$ is of class $\cE^{[\fM]}$ 
		(resp.\ $\cS^{[\fM]}$).  
	\end{theorem}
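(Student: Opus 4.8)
The plan is to deduce \Cref{isotropic} from \Cref{Chevalley} and \Cref{GelfandShilov} via the trace splitting prepared above, handling the two implications separately. The forward implication is immediate: $\on{Diag}(n)$ is a linear subspace of $\on{Sym}(n)$, and under the symmetric-function identification $F$ is simply the restriction $f|_{\on{Diag}(n)}$. Since all the classes $\cE^{[\fM]}$ and $\cS^{[\fM]}$ are stable under restriction to a linear subspace (the derivatives along the subspace are among those of $f$, and decay is inherited), $f$ of class $\cE^{[\fM]}$ (resp.\ $\cS^{[\fM]}$) forces $F$ to be of the same class. So the entire content lies in the converse, for which I would use the functions $g$ and $G$ built before the statement.

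The key is to read $g$ and $G$ correctly as functions on the product spaces $\fp \times \R$ and $\fa \times \R$. Writing $A = B + s\mathbb{I}$ with $B \in \on{Sym}(n)_0 = \fp$ the trace-free part and $s = \tfrac{1}{n}\Tr A$, the map $(B,s) \mapsto B + s\mathbb{I}$ is a linear isomorphism $\fp \times \R \to \on{Sym}(n)$ that intertwines the $K = \on{SO}(n)$-action by conjugation on $\fp$ (trivial on the $\R$-factor, since $K$ fixes $\mathbb{I}$) with the conjugation action on $\on{Sym}(n)$; likewise $(b,s)\mapsto b + s(1,\ldots,1)$ gives $\fa \times \R \to \R^n$ intertwining the $W = \on{S}_n$-action. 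In these coordinates $g$ and $G$ are exactly $f$ and $F$, so that $g$ is $K$-invariant, $G$ is $W$-invariant, and $g|_{\fa\times\R} = G$; moreover, because the coordinate changes are linear isomorphisms, $F$ is of class $\cE^{[\fM]}$ (resp.\ $\cS^{[\fM]}$) if and only if $G$ is, and similarly for $f$ and $g$. Thus the converse reduces to showing that $G \in \cE^{[\fM]}(\fa\times\R)^W$ implies $g \in \cE^{[\fM]}(\fp\times\R)^K$, and the analogue for $\cS^{[\fM]}$.

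This is precisely \Cref{Chevalley} and \Cref{GelfandShilov}, except for the extra flat factor $\R$ recording the trace. I would obtain the parametrized statements by running those proofs verbatim on $\fp \times \R$ and $\fa\times\R$: the relevant elliptic operator is now $\De_{\fp} + \p_s^2 = \De_{\fp\times\R}$, whose radial part on $\fa\times\R$ is $\on{rad}(\De_\fp) + \p_s^2$, since the trace direction is $K$-fixed and contributes no new singular hyperplanes. The identity \eqref{key2} and the estimate \eqref{estimate} persist with $\De_\fp$ replaced by $\De_{\fp\times\R}$, because $\p_s$ commutes with the whole $K$-action and with every $d_{v_\al}$, so the iterated integral representation of $\on{rad}(\De_\fp)^m$ used for \eqref{estimate} merely acquires harmless commuting powers of $\p_s^2$; \Cref{lem:A2}, applied with $\De_{\fp\times\R}$ on $\fp\times\R$, then yields $g \in \cE^{[\fM]}(\fp\times\R)^K$. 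For the Gelfand--Shilov case the extension-and-Fourier argument of \Cref{GelfandShilov} applies once one notes that $K$ fixes the trace coordinate, so $|x|^2 + s^2$ remains a $K$-invariant polynomial and $\De_{\fp\times\R}$ the pertinent operator, with \Cref{lem:A3} and the decay of $g$ in $s$ inherited from $G$.

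The main obstacle is therefore not the group theory, which is carried by \Cref{Chevalley}, but the bookkeeping of the trace direction: one must verify that adjoining the flat Euclidean factor neither destroys the ellipticity used in \Cref{lem:A2} nor introduces singularities into the radial part, and that the commutation of $\p_s$ with $\De_\fa$ and the $d_{v_\al}$ lets \eqref{estimate} pass to the product (the binomial cross terms being absorbed by $(R_2)$, resp.\ $(B_2)$). Once this is checked, the passage ``$g$ of class, hence $f$ of class'' is just the linear change of coordinates, completing both implications.
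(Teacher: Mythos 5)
Your proof is correct and follows essentially the same route as the paper: the trivial restriction direction, plus the trace splitting $A = B + \tfrac{1}{n}(\Tr A)\mathbb{I}$ reducing the converse to \Cref{Chevalley} and \Cref{GelfandShilov}. You are in fact more explicit than the paper on the one delicate point: the paper simply asserts that those theorems apply to $g$ and $G$, whereas these are really functions on $\fp \times \R$ and $\fa \times \R$ (the trace being a parameter, on which $K$ and $W$ act trivially), and your verification that the proofs run verbatim on the product with the elliptic operator $\De_{\fp} + \p_s^2$, whose radial part $\on{rad}(\De_\fp) + \p_s^2$ acquires no new singular hyperplanes, is exactly the bookkeeping needed to make that assertion rigorous.
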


	Clearly, we immediately get isotropic versions of \Cref{cor1,cor2,cor3}.

	\begin{remark}
		Alternatively, it is possible to use the inductive formula for the derivatives of $f$ derived in \cite{Silhavy2000} 
		in order to give a direct proof of \Cref{isotropic}.
	\end{remark}


\def\cprime{$'$}
\providecommand{\bysame}{\leavevmode\hbox to3em{\hrulefill}\thinspace}
\providecommand{\MR}{\relax\ifhmode\unskip\space\fi MR }
\providecommand{\MRhref}[2]{%
  \href{http://www.ams.org/mathscinet-getitem?mr=#1}{#2}
}
\providecommand{\href}[2]{#2}

\end{document}